\newtheorem{theorem}{Theorem}[section]
\newtheorem{lemma}[theorem]{Lemma}
\theoremstyle{definition}
\newtheorem{definition}[theorem]{Definition}
\theoremstyle{remark}
\newtheorem{remark}[theorem]{Remark}
\numberwithin{equation}{section}
\newcommand\Sym{\mathfrak S}
\newcommand{\R}{\mathbb{R}}
\newcommand{\Z}{\mathbb{Z}}
\newcommand{\F}{\mathbb{F}}
\newcommand{\B}{\mathrm{B}}
\newcommand{\E}{\mathrm{E}}
\newcommand{\BO}{\mathrm{BO}}
\newcommand{\EO}{\mathrm{EO}}
\newcommand{\OO}{\mathrm{O}}
\newcommand{\cp}{\mathfrak c}
\newcommand\oo{\mathfrak o}
\DeclareMathOperator{\depth}{depth}
\DeclareMathOperator{\conv}{conv}
\DeclareMathOperator{\relint}{relint}
\DeclareMathOperator{\vol}{vol}
\begin{document}

\title{A center transversal theorem for an improved Rado depth}

\author{Pavle V. M. Blagojevi\'{c}}
\thanks{P. B. is supported by DFG via the Collaborative Research Center TRR~109 ``Discretization in Geometry and Dynamics'', and the grant ON 174008 of the Serbian Ministry of Education and Science.}
\address{Inst. Math., FU Berlin, Arnimallee 2, 14195 Berlin, Germany\hfill\break%
\mbox{\hspace{4mm}}Mat. Institut SANU, Knez Mihailova 36, 11000 Beograd, Serbia}
\email{blagojevic@math.fu-berlin.de}

\author{Roman~Karasev}

\thanks{R. K. is supported by the Federal professorship program grant 1.456.2016/1.4, the Russian Science Foundation grant 18-11-00073, and the Russian Foundation for Basic Research grant 18-01-00036}

\address{Moscow Institute of Physics and Technology, Institutskiy per. 9, Dolgoprudny, Russia 141700\hfill\break
\mbox{\hspace{4mm}}Institute for Information Transmission Problems RAS, Bolshoy Karetny per. 19, Moscow, \hfill\break 
\mbox{\hspace{4mm}}Russia 127994}

\email{r\_n\_karasev@mail.ru}
\urladdr{http://www.rkarasev.ru/en/}

\author{Alexander Magazinov}

\address{School of Mathematics, Tel Aviv University, 69978 Tel Aviv, Israel}

\email{magazinov@post.tau.ac.il}

\thanks{A. M. is supported by ERC Advanced Research Grant no. 305629 (DIMENSION)}

\begin{abstract}
A celebrated result of Dol'nikov, and of \v{Z}ivaljevi\'c and Vre\'cica, asserts that for every collection of $m$ measures $\mu_1,\dots,\mu_m$  on the Euclidean space $\mathbb R^{n + m - 1}$ there exists a projection onto an $n$-dimensional vector subspace $\Gamma$ with a point in it at depth at least $\tfrac{1}{n + 1}$ with respect to each associated $n$-dimensional marginal measure $\Gamma_*\mu_1,\dots,\Gamma_*\mu_m$.

In this paper we consider a natural extension of this result and ask for a minimal dimension of a Euclidean space in which one can require that for any collection of $m$ measures there exists a vector subspace $\Gamma$ with a point in it at depth slightly greater than $\tfrac{1}{n + 1}$ with respect to each $n$-dimensional marginal measure.
In particular, we prove that if the required depth is $\tfrac{1}{n + 1} + \tfrac{1}{3(n + 1)^3}$ then the increase in the dimension of the ambient space is a linear function in both $m$ and $n$.
\end{abstract}

\maketitle

%---------------------------------------------
\section{Introduction}
%---------------------------------------------

We start by introducing a notion of point depth with respect to a given measure in a Euclidean space.
For an analogous notion in a discrete setting, in case of point sets, consult for example ~\cite[Sec.\,1.1]{BMN10}.

An oriented affine hyperplane $H_{v,a}=\{x\in\R^d : \langle x,v\rangle =a\}$ is given by a unit vector $v\in S(\R^d)$ and a constant $a\in\R$.
It determines two closed half-spaces, which we denote by
\[
H^{0}_{v,a}=\{x\in \R^d :\langle x,v\rangle \geq a\},\qquad H^{1}_{v,a}=\{x\in\R^d : \langle x,v\rangle \leq a\}.
\]
The notation $\langle x,v \rangle$ refers to the standard Euclidean scalar product in $\R^d$.
If the constant $a$ is zero then the hyperplane $H_{v,a}$ is an oriented linear hyperplane.

\begin{definition}
\label{def : depth}
Let $N \geq 1$ be an integer, let $x \in \R^N$ be a point, and let $\mu$ be a Borel probability measure on the same space $\R^N$.
The {\bfseries depth of the point} $x$ with respect to the measure $\mu$ is:
\[
\depth_{\mu}(x) := \inf \{\mu(H_{v,a}^0) : H_{v,a} \text{ is an oriented affine hyperplane with }x \in H_{v,a}^0\}.
\]
\end{definition}

In order to distinguish just introduced notion of depth from other notions of depth we recall that the depth we consider is also called the half-space depth, or the Tukey depth~\cite{Tuk75}.
If the measure is clear from the context we omit it from the notation and simply write $\depth(x)$.
Throughout the paper by ``a measure'' we always mean ``a Borel probability measure''.

\medskip

Next we introduce the notion of a marginal, or a projection, of a measure on the Euclidean space $\R^N$ with respect to an affine subspace of $\R^N$.

\begin{definition}
Let $\Gamma$ be an affine subspace of the Euclidean space $\R^N$, and let $\mu$ be a measure on $\R^N$.
For every Borel set $X \subseteq \Gamma$ define
\[
(\Gamma_*\mu)(X): = \mu(\pi_{\Gamma}^{-1}(X))
\]
where $\pi_{\Gamma}\colon\R^N\longrightarrow\Gamma$ denotes the orthogonal projection of $\R^N$ onto $\Gamma$.
Clearly, $\Gamma_*\mu$ is a probability measure on $\Gamma$, and is called a {\bf marginal}, or a {\bf projection}, of the measure $\mu$ with respect to $\Gamma$.
\end{definition}

\medskip
The motivation for the study of the point depth of measures comes from the following result of Dol'nikov~\cite{Dol87} \cite{Dol92}, and \v{Z}ivaljevi\'c and Vre\'cica~\cite{ZV90}, and many of its applications.

\begin{theorem}[Center transversal theorem]
Let $m\geq 1$, $n\geq 1$ and $N \geq 1$ be integers with $N \geq m + n - 1$.
For every collection of $m$ measures $\mu_1, \dots, \mu_m$ on $\R^N$ there exists an $n$-dimensional linear subspace $\Gamma$ and a point $x \in \Gamma$ such that for every $1\leq i\leq m$:
\[\depth_{\Gamma_*\mu_i}(x) \geq \frac{1}{n + 1}.\]
\end{theorem}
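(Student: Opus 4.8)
The plan is to prove the center transversal theorem by the standard topological route, reducing it to a Borsuk–Ulam-type statement about equivariant maps. First I would recall the single-measure ham-sandwich/centerpoint ingredient: for a measure $\nu$ on $\R^n$ the \emph{center region} $C(\nu) := \{x \in \R^n : \depth_\nu(x) \ge \tfrac{1}{n+1}\}$ is a nonempty compact convex set (nonemptiness is Rado's centerpoint theorem; convexity follows because it is an intersection of half-spaces $\{x : \nu(H^1) \ge \tfrac{1}{n+1}\}$ over hyperplanes through $x$, equivalently the complement of a union of open half-spaces of small mass). The goal is then to find a single $n$-dimensional linear subspace $\Gamma$ with $C(\Gamma_*\mu_1) \cap \dots \cap C(\Gamma_*\mu_m) \ne \emptyset$.

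The key step is to set this up as a section problem over the Grassmannian $G_n(\R^N)$, or rather over a suitable sphere bundle, and apply a Borsuk–Ulam argument. The classical approach of Dol'nikov and of \v{Z}ivaljevi\'c–Vre\'cica encodes things via the sphere $S^{N-1}$: a unit vector $v$ and its antipode $-v$ together with the mass distribution of each $\mu_i$ in the two half-spaces $H^0_{v,a}, H^1_{v,a}$. One builds an equivariant map from a configuration space — essentially a join of spheres $S^{N-1} \ast \dots \ast S^{N-1}$ ($m$ factors) or the relevant bundle thereof — to a representation sphere, carrying a $(\Z/2)^m$-action (one sign per measure) together with the orthogonal group action on directions. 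The combinatorial content is: if no good $\Gamma$ existed, the center regions could be separated from each other by hyperplanes, and recording the separating directions and offsets produces an equivariant map to a sphere of dimension too small, contradicting the appropriate Borsuk–Ulam theorem; the dimension count is exactly where the hypothesis $N \ge m+n-1$ enters, since the relevant equivariant map would go from a space of connectivity $\sim N-1$ down to a sphere of dimension $\sim (N-n) + (m-1)$, and these match precisely at the threshold.

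More concretely, I would parametrize oriented $n$-subspaces and, for each measure, a choice of halfspace-type data, forming the total space $E$ of a bundle over $G_n(\R^N)$ whose fiber over $\Gamma$ records, for each $i$, a unit vector $v_i$ in $\Gamma^\perp \oplus \Gamma \cong \R^N$ (or in the orthogonal complement together with an offset) — assembled into a join so that the $\Z/2$ flips are available. Assuming the conclusion fails, for each $\Gamma$ the compact convex sets $C(\Gamma_*\mu_i)$ have empty common intersection, so by Helly-type or a direct separation argument one gets, in a way that can be made continuous and equivariant after passing to the join/deleted-product model, a nowhere-zero equivariant map to the unit sphere of a representation of dimension $N - n + m - 1$; but the source has connectivity $N-2$ (or the appropriate bundle obstruction vanishes only up to that range), forcing $N - n + m - 1 \ge N - 1$, i.e. $m + n - 1 \le N$ is \emph{necessary} for the map to exist — contradicting its construction when $N \ge m+n-1$, hence the conclusion holds.

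I expect the main obstacle to be the continuity and equivariance of the map extracted from the ``separation'' hypothesis: the center regions $C(\Gamma_*\mu_i)$ vary only upper semicontinuously in $\Gamma$ (measures need not be absolutely continuous, so depth can jump), and choosing separating hyperplanes is not canonical. The standard fix is a two-step reduction — first prove the statement for ``nice'' (e.g. absolutely continuous, or even strictly positive Gaussian-smoothed) measures where everything varies continuously and the center regions have nonempty interior, run the topological argument there, and then pass to the general case by a compactness/limiting argument, using that depth and the conclusion $\depth_{\Gamma_*\mu_i}(x) \ge \tfrac{1}{n+1}$ are closed conditions under weak-$*$ convergence of measures together with convergence of the pairs $(\Gamma_k, x_k)$ on a compact parameter space. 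The Borsuk–Ulam input itself (for $(\Z/2)^m$ acting on joins of spheres, or the corresponding Stiefel–Whitney class computation on the Grassmannian bundle) is classical and I would cite it rather than reprove it.
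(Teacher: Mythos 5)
First, note that the paper does not actually prove this statement: it is quoted as the classical result of Dol'nikov and of \v{Z}ivaljevi\'c--Vre\'cica, so your attempt can only be measured against the standard proofs in the literature and against the machinery the paper develops for its own Theorem~\ref{Th : Main}. Your overall architecture (reduce to a section/Borsuk--Ulam problem over the Grassmannian, prove it first for nice measures, then pass to general measures by a weak-$*$ compactness argument, using that the depth condition is closed) is the right one, and the reduction ``find $\Gamma$ with $\bigcap_i C(\Gamma_*\mu_i)\neq\emptyset$'' is correct.

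However, the heart of the argument is missing, and the part you do write down is garbled. (1) From the hypothesis that for every $\Gamma$ the $m$ compact convex sets $C(\Gamma_*\mu_i)\subseteq\Gamma$ have empty common intersection you cannot extract ``a'' separating direction by ``Helly-type or a direct separation argument'': for $m\geq 3$ empty common intersection gives no canonical single hyperplane, and no construction of the promised equivariant map is actually given. The standard fix (and the one implicit in Section~\ref{sec : proof} of this paper) is to choose, for nice measures, a canonical point $\cp(\Gamma_*\mu_i)\in C(\Gamma_*\mu_i)$ depending continuously on $\Gamma$ (centroid or Steiner point of the Tukey median set); if these points never all coincide, the differences $\big(\cp(\Gamma_*\mu_{i+1})-\cp(\Gamma_*\mu_i)\big)_{i=1}^{m-1}$ form a nowhere-zero section of $(\gamma^n_N)^{\oplus(m-1)}$ over $G_n(\R^N)$, which is forbidden because its top Stiefel--Whitney class $w_n(\gamma^n_N)^{m-1}$ is nonzero whenever $m-1\leq N-n$, i.e.\ exactly when $N\geq m+n-1$ (this is the Hiller/Schubert-calculus fact $w_n(\gamma^n_N)^{N-n}\neq 0$ already used in the paper). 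Note that coincidence of the canonical points is stronger than nonempty intersection, so this suffices. (2) Your dimension count, which you yourself identify as the place where $N\geq m+n-1$ enters, is wrong both algebraically and logically: $N-n+m-1\geq N-1$ simplifies to $m\geq n$, not to $m+n-1\leq N$, and the asserted conclusion ``the map exists only if $N\geq m+n-1$, contradicting its construction when $N\geq m+n-1$'' is not a contradiction at all; the inequality must come out in the opposite direction (the obstruction is nonzero, hence the map/section cannot exist, precisely when $N\geq m+n-1$). (3) The appeal to ``connectivity $N-2$ of the source'' is also not directly usable, since the relevant configuration space is a bundle over the (non-simply-connected) Grassmannian; one needs the Fadell--Husseini index or the characteristic-class computation on $G_n(\R^N)$, not a bare connectivity-versus-dimension comparison. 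Until the equivariant map (or section) and the precise obstruction computation are supplied, the proposal is an outline of the right strategy rather than a proof.
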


\noindent
This theorem is a direct extension of the classical Rado theorem, also known as the centerpoint theorem~\cite{Ra46}, which states that every measure $\mu$ on $\R^N$ has a point $x$ with $\depth_{\mu}(x) \geq \tfrac{1}{N + 1}$.
The Rado theorem is a particular case of the center transversal theorem when $m = 1$.

\medskip
A new way to extend the result of Rado was proposed in~\cite{BMN10}.
Namely, given a measure $\mu$ in $\mathbb R^N$, one may wish to find an $n$-linear subspace $\Gamma$ and a point $x \in \Gamma$ such that $\depth_{\mu_{\Gamma}}(x)$ is as large as possible.
The Rado theorem immediately implies that $\depth_{\Gamma_*\mu}(x) \geq \tfrac{1}{N + 1}$ is always possible, and one may ask whether this estimate (called the Rado bound) can be improved.
Since the Rado theorem is optimal with respect to the dimension of the ambient space such an improvement can only come from an increase in the dimension and careful choice of an affine subsace $\Gamma$.
Recently, it was demonstrated in~\cite[Thm.\,2]{MP16} how the Rado bound can indeed be surpassed.
Namely, the following so called Centerline theorem holds.

\begin{theorem}[Centerline theorem]
\label{Theorem:Centerline}
Let $N \geq 3$ be an integer, and set $n: = N - 1$.
For every measure $\mu$ on $\R^N$, there exists an $n$-dimensional linear subspace $\Gamma$, in this case a hyperplane, and a point $x \in \Gamma$ such that
$$
\depth_{\Gamma_*\mu}(x) \geq \frac{1}{n + 1} + \frac{1}{3(n + 1)^3}=\frac{1}{N} + \frac{1}{3N^3}.
$$
\end{theorem}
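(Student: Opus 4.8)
\emph{Proof proposal.} The argument I have in mind is by contradiction and — in contrast with the topological proof of the center transversal theorem — is essentially a quantitative, elementary one: a measure that defeats the desired bound is forced to resemble $N$ equal point masses at the vertices of a simplex, and such near-extremal configurations can be beaten by ``merging'' two of the masses under a suitable projection.

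First I would pass to a convenient class of measures. By weak-$*$ approximation it is enough to treat $\mu$ absolutely continuous with a smooth positive density on a compact convex body, since then the maps $(x,v)\mapsto\mu(\{y:\langle y-x,v\rangle\ge0\})$ and $x\mapsto\depth_{\Gamma_*\mu}(x)$ are continuous and every hyperplane is null; one proves the bound for the approximants with a little slack and lets the approximation go. Assume then, for contradiction, that $\depth_{\Gamma_*\mu}(x)<\tfrac1N+\varepsilon$ for every $(N-1)$-dimensional affine subspace $\Gamma$ and every $x\in\Gamma$, with $\varepsilon:=\tfrac1{3N^3}$.

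Next I would extract a rigid structure. Pick a centerpoint $p$ of $\mu$ (so $\depth_\mu(p)\ge\tfrac1{N+1}$) and translate it to $0$; put $g(v)=\mu(\{y:\langle y,v\rangle\ge0\})$. Applying the contradiction hypothesis with $x=0$ to the hyperplanes through $0$, the open set $B=\{v\in S^{N-1}:g(v)<\tfrac1N+\varepsilon\}$ meets every great subsphere $u^{\perp}\cap S^{N-1}$. Since $g(v)+g(-v)\ge1>\tfrac2N+2\varepsilon$ for $N\ge3$, the set $B$ is antipodal-free; and as a set meeting every great subsphere cannot lie in an open hemisphere, $0\in\conv(B)$. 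By Carathéodory, perturbed using that $B$ is open, there are $v_1,\dots,v_k\in B$ in general position with $0\in\relint\conv\{v_1,\dots,v_k\}$; writing $H_i=\{y:\langle y,v_i\rangle\ge0\}$ one has $\bigcup_iH_i=\R^N$, whence $1\le\sum_i\mu(H_i)<k(\tfrac1N+\varepsilon)$ and $k\in\{N,N+1\}$. In the principal case $k=N$ the total overlap $\sum_i\mu(H_i)-1$ is $<N\varepsilon$, and a short count shows that the $N$ pairwise-disjoint cones $R_j=\bigcap_{i\ne j}\{\langle\cdot,v_i\rangle<0\}$ satisfy $\mu(R_j)\in(\tfrac1N-(2N-1)\varepsilon,\ \tfrac1N+\varepsilon)$ while $\mu(\R^N\setminus\bigcup_jR_j)<N\varepsilon$ — i.e.\ $\mu$ is, up to an $O(N\varepsilon)$ error, an equidistribution over the normal cones of a simplex. (One also uses the hypothesis at points $x\ne0$ to see that the mass in each $R_j$ is itself localized.)

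The final step, and the genuinely hard one, is to contradict this near-extremal picture by the merging argument. I would take the projection direction $u$ along the ``edge'' joining the barycentres of $R_1$ and $R_2$, so that in $\Gamma=u^{\perp}$ the cones $R_1,R_2$ project into one small region of mass $\approx\tfrac2N$ and $R_3,\dots,R_N$ into $N-2$ further pieces of mass $\approx\tfrac1N$; then I would place $x$ inside the merged region but shifted towards the barycentre of the remaining pieces by an amount $\delta$ to be optimized. Estimating $\depth_{\Gamma_*\mu}(x)=\inf_{v\perp u}\mu(\{y:\langle y-x,v\rangle\ge0\})$ directly — every such halfspace either swallows almost all of one projected cone, or cuts the merged region on its larger side — gives a lower bound of the form $\tfrac1N+c\,\delta-(\text{error})$, and choosing $\delta$ optimally produces the surplus $\tfrac1{3N^3}$, the desired contradiction; the case $k=N+1$ is reduced to this one by first dropping the halfspace carrying the least essential mass. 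The delicate points are exactly here: controlling the depth of the shifted point against \emph{all} directions $v\perp u$ simultaneously with errors that do not overwhelm the gain (this is what pins down the constant $\tfrac1{3(n+1)^3}$), together with the bookkeeping needed to make the structure-extraction step — the localization of mass inside the cones and the disposal of the $k=N+1$ case — airtight.
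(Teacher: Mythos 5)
The theorem you are asked to prove is not proved in this paper at all: it is quoted verbatim from Magazinov and P\'or \cite[Thm.\,2]{MP16}, so your argument has to stand on its own, and at present it does not. The first half of your outline is sound and standard: the reduction to nice measures, the observation that the set $B=\{v : g(v)<\tfrac1N+\varepsilon\}$ meets every great subsphere and is antipodal-free, hence $0\in\conv(B)$, and the Carath\'eodory step producing $k\in\{N,N+1\}$ covering half-spaces with the disjoint cones $R_j$ carrying mass $\tfrac1N\pm O(N\varepsilon)$. This correctly identifies the near-extremal structure.

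The genuine gap is that the entire content of the theorem lies in the step you dispose of in one sentence: that projecting along the segment joining the barycentres of $R_1,R_2$ and shifting the candidate point by an optimized $\delta$ gives depth at least $\tfrac1N+\tfrac1{3N^3}$ against \emph{all} directions $v\perp u$. Nothing in your structure step controls how the mass of approximately $\tfrac1N$ inside each (unbounded) cone $R_j$ is distributed; it may lie far from the origin and be elongated along the cone, in which case a half-space through your shifted point need not ``swallow almost all of one projected cone,'' and the dichotomy you invoke fails as stated. The parenthetical remark that using the hypothesis at points $x\neq 0$ localizes the mass inside each $R_j$ is itself a nontrivial lemma that you have neither formulated precisely nor proved, and it is exactly what is needed to make the merging estimate work; similarly, the error bookkeeping that produces the specific constant $\tfrac1{3(n+1)^3}$ and the reduction of the case $k=N+1$ by ``dropping the half-space carrying the least essential mass'' are asserted, not argued. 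These delicate quantitative steps constitute the bulk of the proof in \cite{MP16}; until they are carried out, your text is a plausible plan in the spirit of that proof rather than a proof.
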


\begin{remark}
The theorem is given such a name because the line $\pi_{\Gamma}^{-1}(x)$ is a natural candidate to play a role of a centerline.
\end{remark}

At this point it is important to observe that the depth bound in the center transversal theorem coincides with the Rado bound.
In this paper we combine the features of the center transversal theorem (several measures are considered at once) and of the centerline theorem (surpassing the Rado bound at a cost of increasing the dimension of the ambient space).
Motivated by the previous attempt of the third author (see the first version of this paper~\cite{Magazionov2016}), using advanced methods of algebraic topology, we prove in Section \ref{sec : proof} the following center transversal theorem with an improved Rado depth.

\begin{theorem}[Center transversal theorem with an improved Rado depth]
\label{Th : Main}
Let $m \geq 1$, $n \geq 2$ be integers, and let
\begin{compactitem}
	\item $N \ge 2m+n-1$ if $n+1$ is not a power of $2$, and
	\item $N \ge 3m+n-1$ if $n+1$ is a power of $2$.
\end{compactitem}
For every collection of $m$ measures $\mu_1, \dots, \mu_m$ on $\R^N$, there exists an $n$-dimensional linear subspace $\Gamma$ and a point $x \in \Gamma$ such that for every $1\leq i\leq m$:
$$
\depth_{\Gamma_*\mu_i}(x) \geq \frac{1}{n + 1} + \frac{1}{3(n + 1)^3}.
$$
\end{theorem}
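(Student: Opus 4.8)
The plan is to combine the topological/configuration-space machinery behind the center transversal theorem with the refined geometric mechanism of the centerline theorem. Recall that the classical proof of the center transversal theorem proceeds by parametrizing oriented $n$-dimensional linear subspaces $\Gamma$ together with an oriented affine hyperplane inside each, and studying a $\Z/2$-equivariant map from a suitable Stiefel-type configuration space to a representation sphere; the Borsuk–Ulam–type nonexistence of an equivariant map forces the existence of a ``deep point'' for all $m$ marginals simultaneously, and the cohomological computation exactly consumes $m+n-1$ dimensions to achieve depth $\tfrac{1}{n+1}$. To push the depth up to $\tfrac{1}{n+1}+\tfrac{1}{3(n+1)^3}$ I would, for each candidate subspace $\Gamma$, not merely ask for one centerpoint of $\Gamma_*\mu_i$ but invoke the quantitative improvement of Theorem \ref{Theorem:Centerline} inside the $(n+1)$-dimensional space $\pi_{\Gamma}^{-1}(\Gamma)$ — or rather to set up the problem so that the extra degrees of freedom in $N$ (the surplus over $m+n-1$) are spent on the ``centerline'' construction for each measure. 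The key reduction is that improving the Rado bound for the marginal $\Gamma_*\mu_i$ on $\R^n$ is equivalent to the Centerline theorem applied to a measure on $\R^{n+1}$, so each of the $m$ measures costs one additional dimension in the non-power-of-$2$ case.

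Concretely, the first step is to build the test configuration space. I would take a product over $i=1,\dots,m$ of the spaces used in the centerline theorem's proof, fibered over the Grassmannian-like parameter space of $n$-subspaces, and form an appropriate join; the target will be a sum of sign representations of a group that is $\Z/2$ in the generic case but involves a larger $2$-group (hence the power-of-$2$ dichotomy and the jump from $2m$ to $3m$) when $n+1$ is a power of $2$, since the relevant characteristic classes — Stiefel–Whitney classes of bundles over $\R P^{n}$ or over the configuration space of the centerline argument — behave differently there. The second step is the cohomological index computation: I would compute the Fadell–Husseini index (or equivalently estimate the relevant $\Z/2$-cohomological dimension) of the configuration space and of the sphere, and show that the dimension hypothesis $N\ge 2m+n-1$ (resp.\ $N\ge 3m+n-1$) guarantees that no equivariant map exists, exactly as in the proof of the center transversal theorem but with the centerline refinement baked into each factor. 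The third step is the standard geometric unwinding: a point where the equivariant map would have to vanish translates, via a limiting/compactness argument over the (noncompact) space of hyperplanes, into a subspace $\Gamma$ and a point $x\in\Gamma$ with $\depth_{\Gamma_*\mu_i}(x)\ge \tfrac{1}{n+1}+\tfrac{1}{3(n+1)^3}$ for every $i$.

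The main obstacle I anticipate is the cohomological index calculation in the power-of-$2$ case. When $n+1=2^k$, the Stiefel–Whitney class that obstructs the equivariant map lives in a degree where the total class of the underlying bundle over $\R P^{\,\cdot}$ collapses (the binomial coefficient $\binom{n}{\text{something}}$ vanishes mod $2$), so one cannot simply multiply the top classes of the $m$ factors and conclude; one needs an extra ``reserve'' dimension per measure to reconstruct a nonzero product, which is precisely why the bound degrades to $3m+n-1$. Carrying this out requires a careful choice of the equivariant bundle — possibly replacing $\Z/2$ by $(\Z/2)^2$ or a dihedral group acting on an enlarged configuration space — and a Leray–Hirsch or Borel-spectral-sequence argument to pin down the nonvanishing product. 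A secondary technical point is ensuring the limiting argument is valid: the marginals $\Gamma_*\mu_i$ need not be absolutely continuous, so one must either perturb to the generic (smooth, ``in general position'') case and then pass to the limit, or argue directly with the lower semicontinuity of Tukey depth; I would handle this by a standard approximation of each $\mu_i$ by measures with smooth densities, prove the result there, and take a weak-$*$ limit, using that the set of valid $(\Gamma,x)$ is closed.
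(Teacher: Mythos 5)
Your sketch identifies the right general flavor (obstruction theory over a parameter space of $n$-subspaces, with the centerline refinement supplying the extra per-measure cost), but as written it has genuine gaps at exactly the points where the real work lies. First, the claimed reduction ``improving the Rado bound for $\Gamma_*\mu_i$ is equivalent to the Centerline theorem on $\R^{n+1}$'' is unsubstantiated and cannot serve as a black box: the same subspace $\Gamma$ must work simultaneously for all $m$ measures, so applying Theorem~\ref{Theorem:Centerline} measure by measure yields nothing, and the statement of that theorem provides no structure varying continuously with $\Gamma$. What is actually needed (and what the paper uses, via the constructions behind the proof in \cite{MP16}) is a \emph{continuous} assignment to every nice marginal of insufficient depth of a canonical centered regular simplex in the fiber $\Gamma$; a counterexample then produces partial sections of the bundle with fiber $\OO(n)/\Sym_{n+1}$ over the Grassmannian $G_n(\R^N)$, and the obstruction is that $w_n(\gamma^n)$ (resp.\ $w_1(\gamma^n)w_n(\gamma^n)$) pulls back to zero under $\sigma_n^*\colon H^*(\BO(n);\F_2)\to H^*(\B\Sym_{n+1};\F_2)$, combined with a Lusternik--Schnirelmann-type cup-product argument and the nonvanishing of $w_n(\gamma^n_N)^{N-n}$, resp.\ $w_1(\gamma^n_N)^m w_n(\gamma^n_N)^{2m-1}$, proved by Hiller's results and Schubert calculus. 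None of these ingredients, nor any substitute index computation, appears in your plan; ``compute the Fadell--Husseini index'' is stated as a step but the quantitative bounds $2m+n-1$ and $3m+n-1$ come precisely from such concrete nonvanishing results, which you do not supply. You also never explain how a single common point $x$ deep for all $m$ marginals is forced; the paper needs the continuous center $\cp(\mu)$ and the coincidence/non-coincidence dichotomy (giving a nonzero section of $(\gamma^n_N)^{\oplus(m-1)}$ on the non-coincidence set), and nothing analogous is present in your construction.

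Second, your diagnosis of the power-of-$2$ dichotomy is off target. It has nothing to do with binomial coefficients of bundles over $\R\mathrm P^n$ collapsing or with replacing $\Z/2$ by $(\Z/2)^2$ or a dihedral group; the relevant group throughout is the full symmetric group $\Sym_{n+1}$ permuting the vertices of the inscribed regular simplex. If $n+1$ is not a power of $2$, a Sylow $2$-subgroup of $\Sym_{n+1}$ acts intransitively on the $n+1$ vertices, so the reduced regular representation acquires a trivial summand after restriction and $\sigma_n^*(w_n)=0$; if $n+1=2^k$ this fails, and one must instead show $\sigma_n^*(w_1w_n)=0$ via the detection of $H^*(\B\Sym_{2^k};\F_2)$ by $E_k\cong(\Z/2)^k$ and $\Sym_{2^{k-1}}\times\Sym_{2^{k-1}}$ (Dickson invariants), after which the need for $w_1^m w_n^{2m-1}\neq 0$ is what degrades the bound to $3m+n-1$. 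Your final approximation step (prove for smooth densities, pass to the limit using closedness of the solution set) is consistent with the paper's reduction to nice measures, but by itself it does not repair the missing core of the argument.
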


There are two alternative approaches that might be employed to obtain a similar result.
One approach is related the argument given by Bukh, Matou\v{s}ek, and  Nivasch in \cite{BMN10}, and the other one relies on the work of Klartag~\cite{Kl10}.
Both of them yield an estimate of the form $N \geq m f(n)$. Using the topological approach of~\cite{BMN10} one arrives to $f(n)$ growing exponentially in $n$. Using the method of ``almost orthogonal
decomposition'' of a measure as in~\cite{Kl10} one can obtain $f(n) \sim n^p$, where $p$ is a sufficiently large constant (at least, $p > 2$).

Natural questions arise concerning the optimality of Theorem~\ref{Th : Main}. Are the bounds of Theorem~\ref{Th : Main} for $N$ optimal? If they are not optimal, and there is a bound $N > n + cm + c'$,
is it true that $c > 1$? (Note that the original Center transversal theorem features a bound of that form with $c = 1$.) The author do not know the answers to these questions.

\medskip
\noindent
{\bf Acknowledgements.}
The authors are grateful to Vladimir Dol'nikov and Gaiane Panina for suggesting the several-measure setup, and to Bo'az Klartag for pointing out some of the needed constructions.
Furthermore we want to express our gratitude to the referee for excellent observations and many useful comments.

%---------------------------------------------
\section{Structures assigned to a measure}
%---------------------------------------------

All the results in the sections to come will be first established for {\bf nice measures}.
These are measures with continuous density functions whose support is compact and connected.
Then we prove Theorem~\ref{Th : Main} for nice measures as well.
After that the general case of Theorem~\ref{Th : Main} follows from the ``nice'' case by a classical approximation argument limiting the space of feasible solutions to a compact set.

\begin{definition}
Let $n \geq 1$ be an integer and let $\mu$ be a measure on the Euclidean space $\R^n$.
The {\bf depth of the measure} $\mu$ is:
\[
\depth(\mu):= \sup\limits_{x \in \mathbb R^n} \depth_{\mu}(x).
\]
\end{definition}

\begin{definition}
For a given nice measure $\mu$ on $\R^n$, we specify the point $\cp(\mu)$ associated to the measure $\mu$ as follows:
\begin{compactitem}
\item If $\depth(\mu) \le \frac{1}{n + 1} + \frac{1}{3(n + 1)^3}$ then define  $\cp(\mu)\in\R^n$ to be the unique point of maximal depth with respect to $\mu$, that is $\depth (\mu)=\depth_{\mu}(\cp(\mu))$.
	(The existence and uniqueness of such a choice was established in~\cite[Lemma\,3.1]{MP16}.)

\item If $\depth(\mu) > \frac{1}{n + 1} + \frac{1}{3(n + 1)^3}$ consider the set
      \[
      K(\mu) = \Big\{ x \in \R^n : \depth_{\mu}(x) \geq \frac{1}{n + 1} + \frac{1}{3(n + 1)^3} \Big\}.
      \]
      It is a convex $n$-dimensional body as a complement of a union of open half-spaces.
      Now we define $\cp(\mu)$ to be the barycenter of $K(\mu)$.
\end{compactitem}
\end{definition}

The point $\cp(\mu)\in\R^n$ depends continuously on the measure $\mu$.
In order to verify this at the measure $\mu_0$ we consider three cases depending on the depth of the measure $\mu_0$:
\begin{compactenum}[\rm ~~(1)]
  \item If $\depth(\mu_0) < \frac{1}{n + 1} + \frac{1}{3(n + 1)^3}$,  then continuity is proved in~\cite[Lemma~4]{MP16}.

  \item If $\depth(\mu_0) > \frac{1}{n + 1} + \frac{1}{3(n + 1)^3}$, then the following pointwise convergence holds:
        \[
        \frac{\mathbf 1_{K(\mu) \cup \partial K(\mu_0)}}{\vol\, K(\mu)} \longrightarrow \frac{\mathbf 1_{K(\mu_0)}}{\vol\, K(\mu_0)}, \qquad\qquad \text{for $\mu \longrightarrow \mu_0$.}
        \]
        The Bounded convergence theorem for integrals \cite[Ex.\,1.5.18]{Tao2011} of the above functions implies the continuity of $\cp(\mu)$.
  \item If $\depth(\mu_0) = \frac{1}{n + 1} + \frac{1}{3(n + 1)^3}$ then we can repeat the argument of~\cite[Lemma~4]{MP16} to show that the point $\cp(\mu)$ cannot escape from any fixed neighborhood of $\cp(\mu_0)$ provided that the measure $\mu$ is sufficiently close to the measure $\mu_0$.
\end{compactenum}

\medskip
Furthermore, it is shown in~\cite[Lemma~5]{MP16} that, if a nice measure $\mu$ on $\R^n$ has {\bf insufficient depth}, that is,
\[
\depth(\mu) < \frac{1}{n + 1} + \frac{1}{3(n + 1)^3}
\quad \Longleftrightarrow\quad
\depth_{\mu} \cp(\mu) < \frac{1}{n + 1} + \frac{1}{3(n+ 1)^3},
\]
 then it is possible to construct a set of points $\{v_0(\mu), v_1(\mu), \dots, v_n(\mu)\}$ in $\R^n$, that depends continuously on the measure $\mu$, satisfying
\[
\mathbf 0 \in \relint \big(\conv \{ v_0(\mu), v_1(\mu), \dots, v_n(\mu) \}\big) .
\]

The intuition behind this phenomenon might become more evident by considering a ``typical'' measure of insufficient depth. 
Let points $e_0, e_1, \dots, e_n \in \R^n$ satisfy $\mathbf 0 \in \relint \big(\conv \{ e_0, e_1, \dots, e_n \}\big)$, and let
\[
\mu = \tfrac{1}{d + 1}(\nu_0 + \nu_1 + \dots + \nu_n),
\]
where $\nu_i$ is a nice measure sharply concentrated around $e_i$. (We also require $o(\nu) = \mathbf 0$, but this can also be
settled by the particular choice of $\nu_i$.) It is not hard to check that $\depth_{\nu}(0)$ is close to $\tfrac{1}{d + 1}$,
so $\mu$  is indeed a measure of insufficient depth. If we were restricted only to this type of measures, then it would
have been natural to put $v_i = e_i$.

Given a set $\{v_0(\mu), v_1(\mu), \dots, v_n(\mu)\}$, there is a unique, up to multiplying by a common positive factor, positive dependence
\[
\lambda_0 v_0(\mu) + \lambda_1 v_1(\mu) + \dots + \lambda_n v_n(\mu) = 0, \qquad\qquad \lambda_i > 0. 
\]
The additional condition that $\vol \big( \conv \{ \lambda_i v_i : 0\leq i\leq n \}\big) = 1$ determines a unique simplex $\Sigma(\mu) = \conv \{ \lambda_i v_i : 0\leq i\leq n \}$,
which depends continuously on $\mu$.

Let $A_{\mu}$ be a linear map such that $\Sigma(\mu) = A_{\mu}\Delta_n$, where $\Delta_n$ is the standard regular unit simplex in $\R^n$. ($A_{\mu}$ is thus not unique; it is defined
up to a permutation of the vertices of $\Delta_n$.) Consider the polar decomposition $A_{\mu}=S_{\mu}R_{\mu}$, where $S_{\mu}$ and $R_{\mu}$ are, respectively, the symmetric part and the orthogonal part.
The identities $S_{\mu} = \sqrt{A_{\mu}A^t_{\mu}}$ and $R_{\mu} = A_{\mu} S^{-1}_{\mu}$ show that the polar decomposition depends continuously on the non-degenerate linear map $A_{\mu}$.

\begin{definition}
Let $\mu$ be a nice measure on $\R^n$ with insufficient depth.
Using already introduced notation set:
\[
 \Delta(\mu): = R_{\mu}(\Delta_n).
\]
\end{definition}
Since $R_{\mu}$ is an orthogonal operator, the simplex $\Delta(\mu)$ is regular and unit. Moreover, the simplex $\Delta(\mu)$ does not depend on the choice of the simplex $\Delta_n$, because if we choose another $T (\Delta_n)$, with $T$ orthogonal, instead of $\Delta_n$ then $A_{\mu}(\Delta_n) = A' (T (\Delta_n))$ implies that the polar decompositions are $A_{\mu} = S_{\mu}R_{\mu}$ and $A' = S_{\mu}R_{\mu}T^{-1}$, so $R_{\mu}T^{-1}T(\Delta_n)= R_{\mu}(\Delta_n)$.
Hence $\Delta(\mu)$ is indeed only a function of $\mu$.
Finally, since $R_{\mu}$ depends continuously on $\mu$, the same is true for the simplex $\Delta(\mu)$.

\medskip
If we restrict ourselves to nice measures, then Theorem~\ref{Th : stronger} below is a stronger statement than our main result Theorem \ref{Th : Main}.
Theorem~\ref{Th : stronger} will be proved in Section \ref{sec : proof}.

\begin{theorem}\label{Th : stronger}
Let $m \geq 1$, $n \geq 2$ be integers, and let
\begin{compactitem}
	\item $N \ge 2m+n-1$ if $n+1$ is not a power of $2$, and
	\item $N \ge 3m+n-1$ if $n+1$ is a power of $2$.
\end{compactitem}
For every collection of $m$ nice measures $\mu_1, \dots, \mu_m$ on $\R^N$, there exists an $n$-dimensional linear subspace $\Gamma$ with the property that all marginal measures $\Gamma_*\mu_1,\dots,\Gamma_*\mu_m$ have sufficient depth, that is,
\[
\depth(\Gamma_*\mu_1) \geq \frac{1}{n + 1} + \frac{1}{3(n + 1)^3},\ \dots, \ \depth(\Gamma_*\mu_m) \geq \frac{1}{n + 1} + \frac{1}{3(n + 1)^3},
\]
and in addition
\[
\cp(\Gamma_* \mu_1)=\dots=\cp(\Gamma_* \mu_m).
\]
\end{theorem}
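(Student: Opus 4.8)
\medskip
\noindent\textbf{Proof proposal.} The plan is to prove Theorem~\ref{Th : stronger} by a configuration space / test map scheme, argued by contradiction, and then to deduce Theorem~\ref{Th : Main} for nice measures by taking $x:=\cp(\Gamma_*\mu_1)=\dots=\cp(\Gamma_*\mu_m)$: this point lies in $K(\Gamma_*\mu_i)$ for each $i$, hence $\depth_{\Gamma_*\mu_i}(x)\ge\tfrac{1}{n+1}+\tfrac{1}{3(n+1)^3}$; the general case then follows from the ``nice'' case by the compactness/approximation argument announced at the start of Section~2. So assume, for contradiction, that no $n$-dimensional linear subspace $\Gamma\subseteq\R^N$ simultaneously makes every $\Gamma_*\mu_i$ of sufficient depth and makes all $\cp(\Gamma_*\mu_i)$ coincide.

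First I would set up the configuration space over the real Grassmannian $G_n(\R^N)$ (possibly after adjoining auxiliary orthonormal frames, as in the proof of the classical center transversal theorem): the vertices of the regular unit simplices $\Delta(\Gamma_*\mu_i)$ carry a labelling ambiguous up to the symmetry group $\Sym_{n+1}$, so I would pass to the corresponding free $G$-cover $X\to G_n(\R^N)$, where $G=\prod_{i=1}^{m}G_i$ and each $G_i\cong\Sym_{n+1}$ (perhaps extended by a $\Z/2$ recording the orientation of half-spaces). On $X$ I would build a $G$-equivariant test map $f=(f_1,\dots,f_m)$ into a sum $V=\bigoplus_{i=1}^{m}V_i$ of (possibly twisted) $G$-representation bundles, where $f_i$ is assembled from the data of Section~2 so that $f_i$ vanishes at a point exactly when $\Gamma_*\mu_i$ has sufficient depth and $\cp(\Gamma_*\mu_i)$ equals $\cp(\Gamma_*\mu_1)$. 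A common zero of $f_1,\dots,f_m$ then produces a subspace $\Gamma$ with the properties in Theorem~\ref{Th : stronger}; so under our standing assumption $f$ is nowhere zero and $f/\|f\|$ is a $G$-map $X\to S(V)$. The proof is completed by showing that, under the hypotheses on $N$, no such $G$-map can exist.

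The crux of the construction of $f_i$ is to combine, continuously and $G_i$-equivariantly, the ``sufficient'' and ``insufficient'' regimes of Section~2: on the sufficient side one records only the displacement $\cp(\Gamma_*\mu_i)-\cp(\Gamma_*\mu_1)$ and sets the ``insufficiency'' part to zero, while on the insufficient side one must additionally record a nonzero vector built from the labelled vertices of $\Delta(\Gamma_*\mu_i)$ (equivalently from the points $v_0(\Gamma_*\mu_i),\dots,v_n(\Gamma_*\mu_i)$). One must keep $\dim V_i$ linear in $n$ — using the full space of labelled simplices would only give a bound polynomial in $n$, of the type obtainable from \cite{BMN10} or \cite{Kl10} — so the simplex should be used to produce just a roughly $n$-dimensional ``insufficiency vector'' in a single copy of the reduced permutation representation $W_n$ of $\Sym_{n+1}$ (that is, $W_n=\{x\in\R^{n+1}:\sum_j x_j=0\}$), together with the orientation data; balancing $\dim V=\sum\dim V_i$ against the cohomological size of $X$ is precisely what makes $N\ge 2m+n-1$ (respectively $N\ge 3m+n-1$) the correct budget. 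For the final non-existence step I would use the Fadell--Husseini ideal-valued index, or an Euler-class argument, restricting $G$ to a product of suitable cyclic or elementary abelian $2$-subgroups so that the computation splits as a product over the $m$ measures; for a single measure it reduces to the statement underlying the Centerline theorem of \cite{MP16}, namely the non-vanishing mod~$2$ of the relevant characteristic class of the bundle associated to $W_n$. The dichotomy ``$n+1$ is / is not a power of $2$'' enters exactly here: when $n+1$ is a power of $2$ the available Stiefel--Whitney class is one degree short, and one compensates by enlarging $X$ — spending a third extra dimension per measure, hence $3m$ in place of $2m$ — just as already happens in \cite{MP16} and in the earlier version~\cite{Magazionov2016}.

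I expect the main obstacle to be twofold, both parts lying in the middle of the argument. First, the continuity (and equivariance) of $f$ across the threshold $\depth(\Gamma_*\mu_i)=\tfrac{1}{n+1}+\tfrac{1}{3(n+1)^3}$: the simplex $\Delta(\Gamma_*\mu_i)$ and the points $v_j(\Gamma_*\mu_i)$ live only on the insufficient side, so one must show that the chosen ``insufficiency vector'' tends to $0$ as the depth increases to the threshold and matches the identically-zero function on the sufficient side — this is precisely the delicate situation flagged in case~(3) of the continuity discussion of $\cp$ in Section~2, and it should be handled by the same kind of ``$\cp(\mu)$ cannot escape a fixed neighbourhood'' estimate from \cite[Lemma~4]{MP16}. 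Second, choosing $X$ and $V$ so that the equivariant-topological computation is simultaneously correct — a zero of $f$ genuinely encodes a valid $\Gamma$, with the $m$ independent vertex-labellings accounted for by the free $\prod\Sym_{n+1}$-action — and tight enough to yield the linear bound rather than the weaker $m\cdot f(n)$; this is the point at which the ``advanced methods of algebraic topology'' advertised in the introduction are genuinely needed.
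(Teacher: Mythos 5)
Your outline is a plan rather than a proof, and it has two genuine gaps. First, the very object your scheme is built on does not exist under the contradiction hypothesis: you propose a global free $G$-cover $X\to G_n(\R^N)$ with $G=\prod_{i=1}^m\Sym_{n+1}$ coming from labellings of the vertices of the simplices $\Delta(\Gamma_*\mu_i)$, together with a global $G$-equivariant test map. But $\Delta(\Gamma_*\mu_i)$ (and the points $v_j(\Gamma_*\mu_i)$) are only defined on the locus where $\Gamma_*\mu_i$ has \emph{insufficient} depth, and under the assumed counterexample all one knows at a given $\Gamma$ is that \emph{either} the centers $\cp(\Gamma_*\mu_i)$ disagree \emph{or} at least one (not every) measure is insufficient. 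So the $m$ simplex covers are defined only over $m$ different open subsets of the Grassmannian, and there is no global space on which your product group acts freely and no globally defined $f_i$; the ``matching across the threshold'' you flag as a technical obstacle is in fact the reason the configuration-space/test-map setup, as stated, cannot be assembled. Second, the decisive computations are absent: you never identify which characteristic class obstructs the sections, why it pulls back to zero on the simplex bundle, or why a suitable power of it survives in $H^*(G_n(\R^N);\F_2)$ under the stated bounds on $N$; ``balance $\dim V$ against the cohomological size of $X$'' and ``use the Fadell--Husseini index restricted to $2$-subgroups'' are placeholders for exactly the content that constitutes the proof, including the source of the dichotomy between $n+1$ a power of $2$ or not.

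For comparison, the paper sidesteps the need for any global equivariant map. Assuming a counterexample, it covers $G_n(\R^N)$ by $m+1$ open sets: $U_0$, where the centers disagree (yielding a nonzero section of $(\gamma^n_N)^{\oplus(m-1)}$, hence killing $w_n(\gamma^n_N)^{m-1}$ there), and $U_1,\dots,U_m$, where the $i$-th marginal is insufficient and hence carries a continuous choice of an \emph{unordered} centered regular simplex, i.e.\ a partial section of the bundle with fiber $\OO(n)/\Sym_{n+1}$ pulled back from $\B\Sym_{n+1}\to\BO(n)$. A Lusternik--Schnirelmann-type product argument (Lemma~\ref{lemma:cover}) then applies: the obstruction classes are $w_n(\gamma^n)$ when $n+1$ is not a power of $2$ (Lemma~\ref{lemma:ozaydin}, via restriction to a Sylow $2$-subgroup which fixes a nonzero vector of the reduced permutation representation) and $w_1(\gamma^n)w_n(\gamma^n)$ when $n+1$ is a power of $2$ (Lemma~\ref{lemma:powtwo}, via detection by $E_k$ and $\Sym_{2^{k-1}}\times\Sym_{2^{k-1}}$, Dickson invariants), and the contradiction comes from the nonvanishing of $w_n(\gamma^n_N)^{N-n}$ (Hiller) in the first case and of $w_1(\gamma^n_N)^m w_n(\gamma^n_N)^{2m-1}$ (Poincar\'e duality or Pieri/Schubert calculus) in the second; these nonvanishing statements are exactly where the bounds $N\ge 2m+n-1$ and $N\ge 3m+n-1$ enter. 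Your proposal would need these same vanishing/nonvanishing facts (or equivariant analogues) to be stated and proved before it could be considered a proof, and its global equivariant framework would have to be replaced by a local (covering) one or otherwise repaired.
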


Indeed, by taking $x = \cp(\Gamma_* \mu_1)$ we immediately prove Theorem~\ref{Th : Main} for nice measures. As mentioned before,
the case of nice measures implies Theorem~\ref{Th : Main} in general.

%---------------------------------------------
\section{Sections of the space of centered regular simplices}
%---------------------------------------------

The problem in question makes us study the space of all centered regular simplices of fixed size contained in the fibers of a real vector bundle.
Let $\xi$ be the $n$-dimensional real vector bundle $F\longrightarrow E\longrightarrow B$ endowed with a Euclidean metric on fibers.
Now we want to identify the space of all centered regular simplices of fixed size contained in the fibers of $\xi$.
Let us pass to the universal situation, consider $\xi=f^*\gamma^n$ as the pullback of the tautological $n$-dimensional real vector bundle  $\gamma^n$ along a classifying map $f \colon B\longrightarrow \BO(n)$.
The classifying space $\BO(n)$ can be identified with the infinite Grassmann manifold $G_{n}(\R^{\infty})$, and a model for $\EO(n)$ can be taken to be the Stiefel manifold $V_{n}(\R^{\infty})$ of all orthonormal $n$-frames in $\R^{\infty}$.

The space of all centered regular simplices of fixed size contained in the fibers of the tautological bundle $\gamma_n$ with ordered vertices is parameterized by the Stiefel manifold $V_{n}(\R^{\infty})$.
Indeed, we can identify $V_{n}(\R^{\infty})$ with the space of all isometries $\R^n \longrightarrow \mathbb R^{\infty}$.
Now fix a unit centered regular $n$-simplex $\Delta_n\subset \R^n$ and make the correspondence $\varphi\longmapsto \varphi(\Delta_n)$ where $\varphi\in V_{n}(\R^{\infty})$.
This identifies $V_{n}(\R^{\infty})$ with the space of all centered regular simplices of fixed size contained in the fibers of  $\gamma^n$.

If the orientation on the vertices is dropped then the resulting space is parameterized by $V_{n}(\R^{\infty})/\Sym_{n+1}$, where the symmetric group $\Sym_{n+1}$ action is fiberwise, and on a fiber it is given by the permutation of vertices on the fixed regular simplex in $\R^n$ (a model of the fiber).
The action of $\Sym_{n+1}$ on the Stiefel manifold $V_{n}(\R^{\infty})$ is extended from the action on $\mathbb R^n$ in the standard way.
In particular, the quotient space $V_{n}(\R^{\infty})/\Sym_{n+1}$ is also a model for $\B\Sym_{n+1}$.

Thus, the space of all centered regular simplices of fixed size contained in the fibers of $\gamma_n$ with unordered vertices is, a model for, $\B\Sym_{n+1}$ and can also be seen as a total space of the fiber bundle $\eta \ : \ \OO(n)/\Sym_{n+1}\longrightarrow  \B\Sym_{n+1} \longrightarrow \BO(n)$.
The projection map $\sigma_n \colon \B\Sym_{n+1} \longrightarrow \BO(n)$ is induced by the representation $\rho\colon\Sym_{n+1}\longrightarrow O(n)$ obtained from the permutation of the vertices of the regular simplex.
This representation can also be described as the $\Sym_{n+1}$-representation $W_{n+1}:=\{(x_1,\dots,x_{n+1})\in\R^{n+1} : \sum x_i=0\}$ where the action of $\Sym_{n+1}$ is given by coordinate permutation.
Such a representation is typically called the reduced regular representation.

\medskip
Now we study the (non-)existence of a continuous selection of the centered regular simplices of fixed size contained in the fibers of $\xi$.
Thus we want to answer a question of the non-)existence of a section of the pullback bundle 
\[
\xymatrix{
\OO(n)/\Sym_{n+1}\ar[r] & E(f^*\eta)\ar[r] & B.
}
\]
Here $E(f^*\eta)$ denotes the total space of the pullback bundle $f^*\eta$.
{\em What can obstruct the existence of such a section?}

Let us consider the following pullback square and its associated diagram in cohomology:
\[
\xymatrix{
E(f^*\eta)\ar[r]^-{\hat{f}}\ar[d]^{\sigma} &
\B\Sym_{n+1}\ar[d]^{\sigma_n}   &
                                &
H^*(E(f^*\eta);\F_2)            &
H^*(\B\Sym_{n+1};\F_2)\ar[l]_-{\hat{f}^*}  \\
B\ar[r]^-{f} 				    &
\BO(n)  						&
					            &
H^*(B;\F_2)\ar[u]_{\sigma^*}      &
H^*(\BO(n);\F_2)\ar[l]_-{f^*}\ar[u]_{\sigma_n^*}.
}
\]
We claim that: If there exists a cohomology class $\oo\in H^*(\BO(n); \F_2)$ with the property that
\[
\sigma_n^* (\oo) = 0
\qquad\text{and}\qquad
f^*(\oo)\neq 0,
\]
then there cannot be any continuous section $s\colon B\longrightarrow E(f^*\eta)$ of the bundle $f^*\eta$.
Indeed, let us assume the opposite, that there exists a continuous section $s\colon B\longrightarrow E(f^*\eta)$.
Since $\sigma\circ s=\mathrm{id}_{B}$ and consequently $s^*\circ \sigma^*=\mathrm{id}_{H^*(B;\F_2)}$ we reached the following contradiction
\[
0\neq f^*(\oo) = (s^*\circ \sigma^*)(f^*(\oo))= (s^*\circ \sigma^*\circ f^*)(\oo)=
 (s^*\circ (\hat{f}^* \circ \sigma_n^*))(\oo)=0.
\]
Thus, assuming the notions already introduced, we have obtained the following criterion.

\begin{lemma}
\label{lemma:charclass}
If there exists a cohomology class $\oo\in H^*(\BO(n); \F_2)$ with the property that
\[
\sigma_n^* (\oo) = 0
\qquad\text{and}\qquad
f^*(\oo)\neq 0,
\]
then there cannot exist a section of the fiber bundle $f^*\eta$ of all centered regular simplices of fixed size contained in the fibers of the vector bundle $\xi$.
\end{lemma}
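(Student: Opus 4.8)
The plan is to run the standard cohomological obstruction argument: a continuous section of a bundle induces a splitting of the projection map on $\F_2$-cohomology, and such a splitting is incompatible with the existence of a class that is killed by $\sigma_n^*$ but survives under $f^*$.

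Concretely, I would argue by contradiction. Suppose a continuous section $s\colon B\longrightarrow E(f^*\eta)$ of $f^*\eta$ exists. Since $\sigma$ is the bundle projection, $\sigma\circ s=\mathrm{id}_B$, so by functoriality of mod-$2$ cohomology the homomorphism $s^*\colon H^*(E(f^*\eta);\F_2)\longrightarrow H^*(B;\F_2)$ is a left inverse of $\sigma^*$, that is $s^*\circ\sigma^*=\mathrm{id}_{H^*(B;\F_2)}$. On the other hand the defining pullback square commutes, $f\circ\sigma=\sigma_n\circ\hat f$, which on cohomology reads $\sigma^*\circ f^*=\hat f^*\circ\sigma_n^*$. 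Chaining these two identities gives
\[
f^*(\oo)=(s^*\circ\sigma^*)\bigl(f^*(\oo)\bigr)=s^*\bigl((\hat f^*\circ\sigma_n^*)(\oo)\bigr)=(s^*\circ\hat f^*)\bigl(\sigma_n^*(\oo)\bigr)=(s^*\circ\hat f^*)(0)=0,
\]
which contradicts the hypothesis $f^*(\oo)\neq0$. Hence no such section $s$ can exist.

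I do not expect any real obstacle inside this lemma: its proof is a short diagram chase, and the only step that needs a word of justification is that $\sigma\circ s=\mathrm{id}_B$ forces $s^*\circ\sigma^*=\mathrm{id}$, which is immediate from the functoriality of singular cohomology. The genuine difficulty is displaced to the later sections, where one must actually produce a class $\oo\in H^*(\BO(n);\F_2)$ with $\sigma_n^*(\oo)=0$ and $f^*(\oo)\neq0$ for the classifying map $f$ of the vector bundle $\xi$ in question — a computation involving the mod-$2$ cohomology rings of $\BO(n)$ and $\B\Sym_{n+1}$ together with the behaviour of the map $\sigma_n$ induced by the reduced regular representation $W_{n+1}$ — but that is outside the scope of the present statement.
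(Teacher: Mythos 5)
Your argument is correct and is essentially identical to the paper's own proof: both assume a section $s$ exists, use $\sigma\circ s=\mathrm{id}_B$ to get $s^*\circ\sigma^*=\mathrm{id}$, and combine this with the commutativity $\sigma^*\circ f^*=\hat f^*\circ\sigma_n^*$ of the pullback square to derive $f^*(\oo)=0$, contradicting the hypothesis. Nothing is missing.
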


\medskip
Now consider a more general situation.
Starting with the vector bundle $\xi$ we want to cover the base space $B=U_1\cup\dots\cup U_m$ by $m$ open sets and have for each set $U_i$ a continuous (partial) section $s_i$ over $U_i$ of the fiber bundle $f^*\eta$ of centered regular simplices of fixed size contained in the fibers of $\xi$.
{\em What can be used for obstruction the existence of such a covering?}

\begin{lemma}
\label{lemma:cover}
Let $\oo\in H^*(\BO(n); \mathbb F_2)$ be a cohomology class with the property that $\sigma_n^* (\oo) = 0$ and $f^*(\oo)^m \neq 0$.
Then there cannot exist $m$ continuous (partial) sections of the fiber bundle $f^*\eta$ over $m$ open sets $U_1,\dots,U_m$ covering $B$.
\end{lemma}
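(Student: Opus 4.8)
The plan is to reduce to the single-section chase of Lemma~\ref{lemma:charclass} on each piece of the cover, and then to combine the resulting local vanishing statements by means of the relative cup product, in the classical Lusternik--Schnirelmann ``cup length'' fashion.

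First I would argue by contradiction: suppose open sets $U_1,\dots,U_m$ cover $B$ and that for each $i$ there is a partial section $s_i\colon U_i\longrightarrow E(f^*\eta)$ of $f^*\eta$, which I regard as a map into $E(f^*\eta)$ with $\sigma\circ s_i=j_i$, where $j_i\colon U_i\hookrightarrow B$ is the inclusion; hence $s_i^*\circ\sigma^*=j_i^*$ on $\F_2$-cohomology. Exactly as in the proof of Lemma~\ref{lemma:charclass}, the commutative cohomology square of the excerpt then gives
\[
j_i^*\big(f^*(\oo)\big)=s_i^*\big(\sigma^*f^*(\oo)\big)=s_i^*\big(\hat f^*\sigma_n^*(\oo)\big)=s_i^*\big(\hat f^*(0)\big)=0
\]
for every $i$, since $\sigma_n^*(\oo)=0$ by hypothesis. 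So the restriction of $f^*(\oo)$ to each member of the cover is trivial.

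Next I would promote each of these vanishings to a relative class. From the long exact cohomology sequence of the pair $(B,U_i)$,
\[
\cdots\longrightarrow H^*(B,U_i;\F_2)\xrightarrow{\ q_i^*\ }H^*(B;\F_2)\xrightarrow{\ j_i^*\ }H^*(U_i;\F_2)\longrightarrow\cdots,
\]
exactness at $H^*(B;\F_2)$ together with $j_i^*(f^*(\oo))=0$ yields classes $\beta_i\in H^*(B,U_i;\F_2)$ with $q_i^*(\beta_i)=f^*(\oo)$. Finally I would feed these into the iterated relative cup product: because the $U_i$ are open (so that the relevant couples are excisive), there is a product
\[
H^*(B,U_1;\F_2)\otimes\cdots\otimes H^*(B,U_m;\F_2)\xrightarrow{\ \smile\ }H^*\big(B,\ U_1\cup\cdots\cup U_m;\F_2\big)
\]
compatible with the maps $q_i^*$ to absolute cohomology, and since $U_1\cup\cdots\cup U_m=B$ its target is $H^*(B,B;\F_2)=0$. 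Therefore
\[
f^*(\oo)^m=q_1^*(\beta_1)\smile\cdots\smile q_m^*(\beta_m)=0,
\]
which contradicts the assumption $f^*(\oo)^m\neq0$.

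The genuinely new ingredient over Lemma~\ref{lemma:charclass} is the last paragraph, and the only point that needs care is the hypothesis under which the relative cup product $H^p(X,A)\otimes H^q(X,B)\to H^{p+q}(X,A\cup B)$ is defined and natural, namely that $\{A,B\}$ be an excisive couple; this is exactly why it is convenient that the $U_i$ are taken to be open. Everything else is the single-section argument applied $m$ times.
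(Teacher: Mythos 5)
Your proof is correct and follows essentially the same route as the paper: the classical Lusternik--Schnirelmann--Schwarz cup-length argument combined with the diagram chase of Lemma~\ref{lemma:charclass}. The only difference is presentational --- the paper cites Schwarz for the fact that if $f^*(\oo)$ restricts to zero on each of the $m$ open sets then $f^*(\oo)^m=0$ (and states the argument contrapositively, applying Lemma~\ref{lemma:charclass} to a single $U_i$ on which $f^*(\oo)$ survives), whereas you spell out that fact via the lift to $H^*(B,U_i;\F_2)$ and the relative cup product into $H^*(B,B;\F_2)=0$.
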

\begin{proof}
We use the property of cohomology multiplication in a way that goes back to classical work of Lusternik, Schnirelmann, and Schwarz see for example \cite[Thm.\,4]{Schwartz}.
From the assumption that $f^*(\oo)^m\neq 0$ we have that $f^*(\oo)\neq 0$ when restricted to some of $U_i$.
Now Lemma~\ref{lemma:charclass} applied to this $U_i$ and the cohomology class $\oo$ yields the proof of lemma.
\end{proof}

In the next step we identify suitable cohomology classes that can be used as obstructions for the existence of partial sections of the bundle $f^*\eta$.

\begin{lemma}
\label{lemma:powtwo}
If $n+1\geq 4$ is a power of $2$ then $\sigma_n^*(w_1(\gamma^n)w_n(\gamma^n))=0$.
\end{lemma}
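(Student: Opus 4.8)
The plan is to translate the statement into a computation with Stiefel--Whitney classes of the reduced regular representation and then to verify the required vanishing after restricting to elementary abelian $2$-subgroups of $\Sym_{n+1}$.

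First I would set up the identifications. The map $\sigma_n$ is induced by the representation $\rho\colon\Sym_{n+1}\to\OO(n)$ with underlying module $W_{n+1}$, so $\sigma_n^*\gamma^n$ is the vector bundle $\beta_W:=\E\Sym_{n+1}\times_{\Sym_{n+1}}W_{n+1}$ over $\B\Sym_{n+1}$ and $\sigma_n^*w_i(\gamma^n)=w_i(\beta_W)$. Since the permutation $\Sym_{n+1}$-module $\R^{n+1}$ is the direct sum of $W_{n+1}$ and the trivial line spanned by $(1,\dots,1)$, the associated permutation bundle $\beta:=\E\Sym_{n+1}\times_{\Sym_{n+1}}\R^{n+1}$ splits off a trivial line bundle with complement $\beta_W$; hence $w(\beta)=w(\beta_W)$, so $w_{n+1}(\beta)=0$ and $w_i(\beta)=\sigma_n^*w_i(\gamma^n)$ for all $i$. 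It thus suffices to prove that $w_1(\beta)\,w_n(\beta)=0$ in $H^{n+1}(\B\Sym_{n+1};\F_2)$ whenever $n+1=2^k$ with $k\geq 2$. (One may note in passing that, as $w_{n+1}(\beta_W)=0$, Wu's formula yields $w_1(\beta_W)w_n(\beta_W)=\mathrm{Sq}^1 w_n(\beta_W)$, so the claim is equivalent to $w_n(\sigma_n^*\gamma^n)$ lying in the kernel of $\mathrm{Sq}^1$.)

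Next I would invoke the classical fact that $H^*(\B\Sym_{n+1};\F_2)$ is detected on elementary abelian $2$-subgroups: the product of restriction homomorphisms $H^*(\B\Sym_{n+1};\F_2)\to\prod_E H^*(\B E;\F_2)$, over all elementary abelian $2$-subgroups $E\leq\Sym_{n+1}$, is injective. It is therefore enough to show that $w_1(\beta)w_n(\beta)$ restricts to $0$ on each such $E$. Fix $E$ and write $\{1,\dots,n+1\}$ as a disjoint union of $E$-orbits $O_1,\dots,O_r$. Each orbit is a transitive $E$-set, hence of the form $E/E_j$ for a subgroup $E_j\leq E$, so $\beta|_E\cong\bigoplus_{j=1}^{r}\R[E/E_j]$, the inflation to $E$ of the regular representations of the elementary abelian groups $E/E_j$. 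Decomposing each regular representation into its $1$-dimensional characters exhibits $\beta|_E$ as a sum of real line bundles whose first Stiefel--Whitney classes are linear forms in $H^1(\B E;\F_2)$; writing $U_j\subseteq H^1(\B E;\F_2)$ for the subspace of linear forms vanishing on $E_j$ (so $|U_j|=|O_j|$), one obtains
\[
w(\beta|_E)=\prod_{j=1}^{r}\ \prod_{0\neq\ell\in U_j}\,(1+\ell).
\]

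Finally I would run a short case analysis. If $r\geq 2$, then the product above has top degree $\sum_{j=1}^{r}(|U_j|-1)=(n+1)-r\leq n-1$, so $w_n(\beta|_E)=0$ and hence $\bigl(w_1(\beta)w_n(\beta)\bigr)|_E=0$. If $r=1$, then $E$ acts transitively and faithfully on the $n+1=2^k$ points, which forces $E_1=\{1\}$ and $E\cong(\Z/2)^k$ acting on itself by translation; then $U_1=H^1(\B E;\F_2)$ is the full $k$-dimensional space of linear forms and $w_1(\beta|_E)=\sum_{0\neq\ell}\ell$, where the sum ranges over the $2^k-1$ nonzero linear forms. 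Each coordinate form occurs in exactly $2^{k-1}$ of these, so for $k\geq 2$ the sum vanishes and again $\bigl(w_1(\beta)w_n(\beta)\bigr)|_E=0$. As $E$ was arbitrary, the detection property gives $w_1(\beta)w_n(\beta)=0$, that is, $\sigma_n^*(w_1(\gamma^n)w_n(\gamma^n))=0$. The step I expect to require the most care is pinning down and citing the detection-on-elementary-abelians property in exactly the form used, and making the identification $\beta|_E\cong\bigoplus_j\R[E/E_j]$ and its decomposition into line bundles fully precise; note that the hypothesis $n+1\geq 4$ (i.e.\ $k\geq 2$) enters exactly at the concluding computation $\sum_{0\neq\ell}\ell=0$, which fails when $k=1$.
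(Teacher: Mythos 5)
Your proof is correct, and while it lands on the same underlying mechanism (kill $w_1$ on one kind of subgroup, kill $w_n$ on another, then use detection), its key inputs differ from the paper's. The paper uses the specific detection statement that $H^*(\B\Sym_{2^k};\F_2)$ is detected by the pair consisting of the Young subgroup $\Sym_{2^{k-1}}\times\Sym_{2^{k-1}}$ and the regularly embedded $E_k\cong(\Z/2)^k$, and then identifies $\mathrm{res}_{E_k}(w_{2^k-2^s})$ with Dickson invariants to see $w_1\in\ker\mathrm{res}_{E_k}$, and quotes the Madsen--Milgram description of $\ker\mathrm{res}_{\Sym_{2^{k-1}}\times\Sym_{2^{k-1}}}$ (equivalently, the invariant vector $(1,\dots,1,-1,\dots,-1)$ giving a trivial summand) to see $w_n$ dies there. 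You instead invoke the broader classical fact that $H^*(\B\Sym_{n+1};\F_2)$ is detected on \emph{all} elementary abelian $2$-subgroups, and treat every such $E$ uniformly by decomposing the permutation bundle over $\B E$ into orbit pieces $\R[E/E_j]$ and hence into real line bundles, so that the total Stiefel--Whitney class is $\prod_j\prod_{0\neq\ell\in U_j}(1+\ell)$: intransitive $E$ kills $w_n$ for degree reasons (a generalization of the paper's invariant-vector remark), while a transitive $E$ is forced to be the regular $(\Z/2)^k$, on which $w_1$ is the sum of all $2^k-1$ nonzero linear forms and vanishes exactly when $k\geq 2$ (an explicit version of the paper's orientability remark, and you correctly locate the use of $n+1\geq 4$ there). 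What your route buys is a self-contained computation with no Dickson invariants or restriction-kernel formulas once detection is granted; the cost is that you need detection for the full family of elementary abelian $2$-subgroups rather than for the two subgroups the paper cites, though that statement is also classical and citable (e.g.\ from Adem--Milgram or Madsen--Milgram), as you anticipate. Your side observations (splitting off the trivial summand so $w(\beta)=w(\sigma_n^*\gamma^n)$, and the Wu-formula reformulation $w_1w_n=\mathrm{Sq}^1 w_n$ for an $n$-dimensional bundle) are correct but not needed for the argument.
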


\begin{proof}
Let $n+1=2^k\geq 4$, and let $w_i:= \sigma_n^* (w_i(\gamma^n))$ for all $i\geq 0$.
As before, $\rho \colon \Sym_{n+1} \longrightarrow \OO(n)$ denotes the representation given by permuting the vertices of a regular simplex in $\R^n$, and $\sigma_n\colon \B\Sym_{n+1} \longrightarrow \BO(n)$ is the corresponding map of classifying spaces.
The cohomology classes $w_i$ are called the Stiefel--Whitney classes of the representation $\rho$, since the vector bundle $\sigma_n^*\gamma^n$ has a description
\[
\xymatrix{
 W_{n+1}\ar[r]&\E\Sym_{n+1}\times_{\Sym_{n+1}} W_{n+1}\ar[r] & \E\Sym_{n+1}/\Sym_{n+1} = \B\Sym_{n+1},
}
\]
where the action of $\Sym_{n+1}$ on $ W_{n+1}=\{(x_1,\dots,x_{n+1})\in\R^{n+1} : \sum x_i=0 \}$ given by $\rho$ is just the coordinate permutation.

Furthermore, let $E_k\cong (\Z/2)^k$ be the elementary abelian subgroup of $\Sym_{2^k}$ given by the regular embedding $(\mathrm{reg})\colon E_k\longrightarrow \Sym_{2^k}$,~\cite[Ex.\,III.2.7]{Adem-Milgram}.
The regular embedding is given by the left translation action of $E_k$ on itself.
According to \cite[Cor.\,VI.1.4]{Adem-Milgram} the subgroups $\Sym_{2^{k-1}}\times\Sym_{2^{k-1}}$ and $E_k$ detect the cohomology $H^*(\B\Sym_{2^k}; \F_2)$, that is, the homomorphism
\[
\mathrm{res}^{\Sym_{2^k}}_{\Sym_{2^{k-1}}\times\Sym_{2^{k-1}}}\oplus \mathrm{res}^{\Sym_{2^k}}_{E_k}\colon
H^*(\B\Sym_{2^k}; \F_2)\longrightarrow H^*(\B(\Sym_{2^{k-1}}\times\Sym_{2^{k-1}}); \F_2)\oplus H^*(\B E_k; \F_2)
\]
is an injection.
The image of the restriction of $H^*(\Sym_{2^k};\mathbb F_2)$ from $\Sym_{2^k}$ to $E_k$ is the ring of $\mathrm{GL}_k(\F_2)$-invariants $H^*(E_k;\F_2)^{\mathrm{GL}_k(\F_2)}$, \cite[Ex.\,III.2.7]{Adem-Milgram}. There are specific elements
\[
d_{k,s}\in H^{2^k-2^s}(E_k;\F_2)^{\mathrm{GL}_k(\F_2)}
\]
for $0\leq s\leq k-1$, called the Dickson invariants, such that $H^*(E_k;\F_2)^{\mathrm{GL}_k(\F_2)}$ is isomorphic to $\F_2[d_{k,k-1},\dots,d_{k,0}]$ as a graded $\F_2$-algebra, \cite[Thm.\,III.2.4]{Adem-Milgram}.
From~\cite[Lemma\,3.26,\,p.\,59]{madsen-milgram} we have that $\mathrm{res}^{\Sym_{2^k}}_{E_k}(w_{2^k-2^s})=d_{k,s}$ for $0\leq s\leq k-1$.
Thus, $w_1\in \mathrm{ker}\big(\mathrm{res}^{\Sym_{2^k}}_{E_k}\big)$.
On the other hand, from \cite[Cor.\,3.30,\,p.\,61]{madsen-milgram}, the kernel of the second restriction is
\[
\mathrm{ker}\Big(\mathrm{res}^{\Sym_{2^k}}_{\Sym_{2^{k-1}}\times\Sym_{2^{k-1}}}\Big)=w_{2^k-1}\cdot \mathbb F_2[w_{2^k-1},\dots,w_{2^k-2^{k-1}}].
\]
In particular we have that $w_{2^k-1}\in \mathrm{ker}\Big(\mathrm{res}^{\Sym_{2^k}}_{\Sym_{2^{k-1}}\times\Sym_{2^{k-1}}}\Big)$.
Combining these facts, we have that
$
w_1w_{2^k-1}\in \mathrm{ker}\Big(\mathrm{res}^{\Sym_{2^k}}_{\Sym_{2^{k-1}}\times\Sym_{2^{k-1}}}\oplus \mathrm{res}^{\Sym_{2^k}}_{E_k}\Big)
$, and consequently $w_1w_{2^k-1}=0$.
This concludes the proof of the lemma.

\smallskip
Assuming the ``detection'' property, a more direct explanation of the fact $w_1w_{2^k-1}=0$ can be given as follows. After restricting to $E_k$ the representation $\rho$ becomes orientable (we use $k\ge 2$), hence the class $w_1$ vanishes after restriction to $E_k$.
After restricting to $\Sym_{2^{k-1}}\times\Sym_{2^{k-1}}$ the representation $\rho$ gets a non-zero invariant vector, for example $(1,\dots, 1,  -1,\dots,-1)$ where $1$ and $-1$ appear the same number of times.
Consequently, the corresponding vector bundle will have a trivial summand implying that the top Stiefel--Whitney class $w_n$ vanishes under the restriction to $\Sym_{2^{k-1}}\times\Sym_{2^{k-1}}$.
Hence, again $w_1w_{2^k-1}=0$.
\end{proof}

\noindent
Notice that in the proof of the previous lemma we could have identified all the monomials in the Stiefel--Whitney classes of the bundle $\gamma_n$ that vanish in $H^*(\B\Sym_{2^k};\mathbb F_2)$.

\begin{lemma}
\label{lemma:ozaydin}
If $n+1$ is not a power of $2$ then $\sigma_n^*(w_n(\gamma^n))=0$.
\end{lemma}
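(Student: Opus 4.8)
The statement to prove is that if $n+1$ is not a power of $2$, then $\sigma_n^*(w_n(\gamma^n)) = 0$, where $\sigma_n\colon \B\Sym_{n+1}\to\BO(n)$ is induced by the reduced regular representation $\rho\colon\Sym_{n+1}\to\OO(n)$ on $W_{n+1}$. Equivalently, writing $\bar w_n := \sigma_n^*(w_n(\gamma^n))$ for the top Stiefel--Whitney class of the bundle $\E\Sym_{n+1}\times_{\Sym_{n+1}}W_{n+1}$ over $\B\Sym_{n+1}$, I must show $\bar w_n = 0$ in $H^n(\B\Sym_{n+1};\F_2)$. The key structural input is the same detection philosophy used in Lemma~\ref{lemma:powtwo}: mod $2$ cohomology of symmetric groups is detected by elementary abelian $2$-subgroups. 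So the plan is to test $\bar w_n$ against all elementary abelian $2$-subgroups $E\le\Sym_{n+1}$ and show the restriction vanishes on each, hence $\bar w_n$ itself vanishes. This reduces to a representation-theoretic fact about the restriction of $W_{n+1}$ to $2$-groups.

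**Key steps.** First, I would recall (and cite, e.g.\ Quillen, or \cite{Adem-Milgram}) that the restriction map $H^*(\B\Sym_{n+1};\F_2)\to\prod_E H^*(\B E;\F_2)$, over a set of representatives of conjugacy classes of elementary abelian $2$-subgroups $E\le\Sym_{n+1}$, is injective. Second, for each such $E$, I analyze the restricted representation $W_{n+1}|_E$. The ambient permutation representation $\R^{n+1}$ of $\Sym_{n+1}$ restricts to $E$ as the permutation representation on the $E$-set $\{1,\dots,n+1\}$; decomposing this set into $E$-orbits, the permutation representation is a sum of $\R[E/E_i]$ over orbit stabilizers $E_i$, and $W_{n+1}|_E$ is this sum minus one trivial summand. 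The crucial arithmetic observation: the top Stiefel--Whitney class $w_n$ of $W_{n+1}|_E$ is the product of the top classes of the orbit summands (up to the removed trivial line, which contributes $1$), and the top class of a permutation bundle $\R[E/H]$ over $\B E$ is a product of nonzero linear forms indexed by the nontrivial characters appearing — but it is nonzero only if $\R[E/H]$ has no trivial summand beyond the obvious one, i.e.\ only in degenerate cases. More to the point: I want to show $w_n(W_{n+1}|_E)=0$ for every elementary abelian $E$. Since $n+1$ is not a power of $2$, the $E$-set $\{1,\dots,n+1\}$ of size $n+1$ cannot be a single free orbit of a $2$-group acting on $2^k$ points filling everything; decomposing into orbits of $2$-power size, at least two orbits are present, or an orbit has a nontrivial stabilizer, and in either case one extracts an extra trivial summand in $W_{n+1}|_E$ (beyond the one already quotiented out): indeed if there are $\ge 2$ orbits, say with stabilizers $H_1,H_2$, then $\R[E/H_1]\oplus\R[E/H_2]$ each contains a trivial line, so after removing only one trivial line from the total, $W_{n+1}|_E$ still contains a trivial summand, forcing its top class $w_n$ to vanish. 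If there is a single orbit $E/H$ with $H\ne 1$, then $|E/H| = n+1$ is a proper $2$-power divisor arrangement — but $n+1$ not a $2$-power rules out $|E/H|=n+1$ being attained with $H=1$ only; with $H\ne1$ we get $|E/H|<|E|$, and $\R[E/H]$ likewise, after removing the trivial line, need not vanish — so I must be careful, but the point is that $|E/H|$ must equal $n+1$, impossible to be a nontrivial power-of-two quotient unless $n+1$ is itself a power of $2$. Hence the single-transitive-orbit case forces $n+1$ to be a power of $2$, contradiction; so always $\ge 2$ orbits, and the extra trivial summand kills $w_n$.

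**The main obstacle.** The delicate point is the orbit-counting argument: I must show that for $n+1$ not a power of $2$, \emph{every} elementary abelian $2$-subgroup $E\le\Sym_{n+1}$ acts on $\{1,\dots,n+1\}$ with at least two orbits (equivalently, non-transitively), so that $W_{n+1}|_E$ retains a trivial summand after the single reduction and $w_n$ restricts to zero. The reason is that a transitive action of a $2$-group $E$ on a set forces the set size to be $|E/\mathrm{Stab}|$, a power of $2$; since $|\{1,\dots,n+1\}| = n+1$ is assumed not a power of $2$, no such transitive action exists, so $E$ has $\ge 2$ orbits. Then $W_{n+1}|_E \cong \left(\bigoplus_j \R[E/H_j]\right)\ominus\R$, and since $\ge2$ of the summands $\R[E/H_j]$ contain the trivial representation, the reduced sum still contains $\R$, whence $w_n(W_{n+1}|_E)=0$. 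By detection across all $E$, $\bar w_n = \sigma_n^*(w_n(\gamma^n))=0$. I expect the only real work is stating the detection theorem precisely and checking the bookkeeping that removing exactly one trivial summand (the one built into $W_{n+1}$) leaves another when there are multiple orbits — which is immediate once the orbit structure is in hand.
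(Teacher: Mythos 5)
Your argument is correct, and its engine is the same as the paper's: when $n+1$ is not a power of $2$, a $2$-group cannot act transitively on $\{1,\dots,n+1\}$ (orbit sizes are powers of $2$), so the restricted representation $W_{n+1}$ acquires a nonzero fixed vector, the associated bundle gets a trivial summand, and the top Stiefel--Whitney class dies after restriction; an injectivity statement then pulls the vanishing back to $H^*(\B\Sym_{n+1};\F_2)$. Where you genuinely diverge is in the injectivity input. You restrict to \emph{all} elementary abelian $2$-subgroups and invoke the detection theorem for symmetric groups, whereas the paper restricts to a \emph{single} Sylow $2$-subgroup $\Sym_{n+1}^{(2)}$ and uses only the standard transfer fact that $\mathrm{res}^{G}_{P}$ is injective on mod $p$ cohomology when $P$ is a Sylow $p$-subgroup (cited from Brown). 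The paper's choice is lighter: the Sylow injectivity is elementary, and the non-transitivity argument needs to be run just once, for $\Sym_{n+1}^{(2)}$ itself. Your route works, but the machinery is heavier, and one caution: Quillen's theorem by itself only gives detection modulo nilpotents (an $F$-isomorphism), so you must cite the genuine detection theorem for symmetric groups (as in Adem--Milgram) rather than Quillen in general. Also, the wobble in your middle paragraph about a single orbit with nontrivial stabilizer is unnecessary --- as you note at the end, transitivity of a $2$-group forces the set size $n+1$ to be a power of $2$ regardless of the stabilizer, so the single-orbit case simply cannot occur; the clean statement is that every $2$-subgroup has at least two orbits, hence $W_{n+1}$ restricted to it has a trivial subrepresentation.
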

\begin{proof}
Now $n+1$ is not a power of $2$.
As in the proof of the previous lemma set $w_i:= \sigma_n^* (w_i(\gamma^n))$ for all $i\geq 0$.
Again $\rho \colon \Sym_{n+1} \longrightarrow \OO(n)$ denotes the representation given by permuting the vertices of a regular simplex in $\R^n$, and $\sigma_n\colon \B\Sym_{n+1} \longrightarrow \BO(n)$ is the corresponding map of classifying spaces.
The cohomology classes $w_i$ are  Stiefel--Whitney classes of  the vector bundle $\sigma_n^*\gamma^n$:
\[
\xymatrix{
 W_{n+1}\ar[r]&\E\Sym_{n+1}\times_{\Sym_{n+1}} W_{n+1}\ar[r] & \E\Sym_{n+1}/\Sym_{n+1} = \B\Sym_{n+1}.
}
\]
The Stiefel--Whitney classes $w_n$ live in $H^n(\Sym_{n+1};\F_2)$.

Now consider an arbitrary Sylow $2$-subgroup $\Sym_{n+1}^{(2)}$ of the symmetric group $\Sym_{n+1}$.
The inclusion map $\alpha_n\colon \Sym_{n+1}^{(2)}\longrightarrow \Sym_{n+1}$ induces the map $\beta_n\colon\B \Sym_{n+1}^{(2)}\longrightarrow \B\Sym_{n+1}$ between the classifying spaces that in turn induces the restriction map 
\[
\mathrm{res}^{\Sym_{n+1}}_{\Sym_{n+1}^{(2)}}\colon H^*(\Sym_{n+1};\F_2)\longrightarrow H^n(\Sym_{n+1}^{(2)};\F_2).
\] 
The restriction map $\mathrm{res}^{\Sym_{n+1}}_{\Sym_{n+1}^{(2)}}$ is injective, see \cite[Prop.\,III.9.5(ii) and Thm.\,III.10.3]{Brown}.
Thus it suffices to prove that  
\[
\mathrm{res}^{\Sym_{n+1}}_{\Sym_{n+1}^{(2)}}(\sigma_n^*(w_n(\gamma^n)))=
\mathrm{res}^{\Sym_{n+1}}_{\Sym_{n+1}^{(2)}}(w_n(\sigma_n^*\gamma^n))=
w_n(\beta_n^*\sigma_n^*\gamma^n)=
0.
\]
Since $n+1$ is not a power of $2$ the Sylow $2$-subgroup $\Sym_{n+1}^{(2)}$ does not act on the set $\{1,\dots,n+1\}$ transitively. 
Thus the fixed point set $W_{n+1}^{\Sym_{n+1}^{(2)}}$ with respect to the action of the Sylow $2$-subgroup $\Sym_{n+1}^{(2)}$ on $W_{n+1}$ is a vector space of positive dimension.
Consequently, the pull-back vector bundle $\beta_n^*\sigma_n^*\gamma^n$ can be decomposed into a Whitney sum of two vector bundles of positive dimensions where one of them is a trivial one. 
Hence, the top Stiefel--Whitney class $w_n(\beta_n^*\sigma_n^*\gamma^n)$ of the pull-back vector bundle $\beta_n^*\sigma_n^*\gamma^n$ has to vanish implying that $\sigma_n^*(w_n(\gamma^n))=0$.
\end{proof}

%---------------------------------------------
\section{Proof of Theorem \ref{Th : Main} and Theorem \ref{Th : stronger}}
\label{sec : proof}
%---------------------------------------------

We have shown in Section 2 that Theorem \ref{Th : Main} follows from Theorem~\ref{Th : stronger}. 
In turn, we  prove Theorem~\ref{Th : stronger} by contradiction.
If a collection of nice measures $\mu_1, \mu_2, \dots, \mu_m$ provides a counterexample to Theorem~\ref{Th : stronger}, then for any $n$-dimensional linear subspace $\Gamma\in G_{n}(\R^N)$ the following holds:
\begin{compactitem} 
\item the points $\cp(\Gamma_*\mu_1),\dots,\cp(\Gamma_*\mu_m)$ associated to the measures $\Gamma_*\mu_1,\dots,\Gamma_*\mu_m$ do not all coincide, {\bf or}	
\item at least one of the measures, for example $\Gamma_*\mu_i$, induces a regular simplex $\Delta(\Gamma_*\mu_i)$ contained in the flat $\Gamma$ centered at its origin in a continuous way.
\end{compactitem}
In other words, the Grassmannian $G_{n}(\R^N)$ can be
covered by $m+1$ open subsets $U_0, U_1,\dots, U_m$ satisfying the conditions:
\begin{compactenum} 
\item The restriction of the Whitney power $(\gamma^{n}_{N})^{\oplus (m-1)}$ to $U_0$ has a continuous nonzero section.
\item The restriction of the bundle $i^*\eta$ to each $U_i$, $1 \leq i \leq m$, has a continuous section.
	  Here $i\colon G_n(\R^N)\longrightarrow G_N(\R^{\infty})$ denotes the natural inclusion.	
\end{compactenum}
Indeed, let $U_0$ be the subset of $G_{n}(\R^N)$ where the points $\cp(\Gamma_*\mu_1),\dots,\cp(\Gamma_*\mu_m)$ do not all coincide.
Then the $(m - 1)$-tuple of vectors
\[
\big(\cp(\Gamma_* \mu_{i + 1}) - \cp(\Gamma_* \mu_i)\big)_{i = 1}^{m - 1}
\]
gives a section required in (1).
Next, let $U_i$ be the open subset of all $\Gamma \in G_{n}(\R^N)$ such that $\depth(\Gamma_*\mu_i) \geq \frac{1}{n + 1} + \frac{1}{3(n + 1)^3}$. Then the related measure $\Gamma_*\mu_i$ defines a regular simplex
$\Delta(\Gamma_*\mu_i)$ creating a section required in (2).
Finally, since $\mu_1, \mu_2, \dots, \mu_m$ is a counterexample to Theorem~\ref{Th : stronger}, the collection $U_0, U_1,\dots, U_m$ is an open cover of $G_{n}(\R^N)$.

\medskip
We will show that such a covering cannot exist by proving that appropriate characteristic classes do not vanish.

For the rest of the proof we use \v{C}ech cohomology for its continuity property: The cohomology of a closed set is a limit of cohomology of its open neighborhoods.

\subsection{}
Let $n+1$ be not a power of $2$, and let $m \geq 1$ be an integer.
Without loss of generality we can assume that $N = 2m+n-1$.
Consider the subset of the  Grassmannian $G_{n}(\R^N)$ where all points $\cp(\Gamma_*\mu_1),\dots,\cp(\Gamma_*\mu_m)$ coincide, that is
\[
X=\{\Gamma\in G_{n}(\R^N) : \cp(\Gamma_*\mu_1)=\dots=\cp(\Gamma_*\mu_m)\}.
\]
From \cite[Lemma\,1.2]{hil1980A} we have that the cohomology class $w_n(\gamma^n_N)^{N-n}$ does not vanish.
Consequently, the cohomology class $w_n(\gamma^n_N)^{N-n-m+1}$ does not vanish along the restriction map $H^*(G_{n}(\R^N);\F_2)\longrightarrow H^*(X;\F_2)$.

Now, according to our assumption, the subspace $X$ is covered by the open sets $U_1,\dots, U_m$ that allow sections of the bundle $i^*\eta$ over each $U_i$.
On the other hand, since $n+1$ is not a power of $2$, using Lemma \ref{lemma:cover} and Lemma \ref{lemma:ozaydin}, we have that $X$ cannot be covered by $N-n-m+1=m$ such open subsets. We have reached a contradiction.
Thus, such a covering does not exist and the proof of the theorem in the case $n+1$ is a power of $2$ is complete.

\subsection{}
Assume $n+1$ is not a power of $2$ and let $m \geq 1$ be an integer.
Without loss of generality we assume that $N = 3m+n-1$.
Let us consider the cohomology class  $w_1(\gamma^n_N)^{m}w_n(\gamma^n_N)^{2m-1}$.
The cohomology class $w_n(\gamma^n_N)^{m-1}$ vanishes along the restriction $H^*(G_n(\R^N);\F_2)\longrightarrow H^*(U_0;\F_2)$, and according to Lemma \ref{lemma:charclass} the cohomology class $w_1(\gamma^n_N)w_n(\gamma^n_N)$  vanishes along any of the restrictions $H^*(G_n(\R^N);\F_2)\longrightarrow H^*(U_i;\F_2)$ for $1\leq i\leq m$.
Since $U_0,U_1,\dots,U_m$ is a covering of  the Grassmannian $G_n(\R^N)$
the product class $w_1(\gamma^n_N)^{m}w_n(\gamma^n_N)^{2m-1}=0$ vanishes over $G_n(\R^N)$.
We reach a contradiction with the assumption of the existence of a cover by proving that $w_1(\gamma^n_N)^{m}w_n(\gamma^n_N)^{2m-1}\neq 0$.
This will conclude the proof of the theorem.
For this fact we offer two different proofs.

\subsubsection{}
 Let us first argue in terms of intersections of the Poincar\'e duals to the classes  $w_n(\gamma^n_N)^{2m-1}$ and $w_1(\gamma^n_N)^{m}$ in the homology modulo two.
A Poincar\'e dual of the class $w_n(\gamma^n_N)^{2m-1}$ is presented by the space $X$ where $2m-1$ generic sections of $\gamma^n_N$ intersect.
Following the framework of the proof of \cite[Lemma\,8]{Karasev2007} we choose $2m-1$ last basis vectors $e_{N-2m+2}, \dots, e_N$ of $\R^N$ and project them onto a subspace $\Gamma\in G_{n}(\R^N)$.
They produce $2m-1$ sections of the vector bundle $\gamma^n_N$.
The space where all the sections are zero is the naturally embedded Grassmannian $G_n(\R^{N-2m+1})\subseteq G_n(\R^N)$, and it is easy to check that the intersection is non-transversal.

Next we consider a piece-wise smooth modulo $2$ cycle $Y$ that represents the Poincar\'e dual of the class $w_1(\gamma^n_N)^{m}$ in $G_n(\R^N)$ that is transversal to the submanifold $G_n(\R^{N-2m+1})$. We have to show that the intersection $Y\cap G_n(\R^{N-2m+1})$ represents a non-zero homology class in $H_*(G_{n}(\R^N); \F_2)$.
It is sufficient to prove that it represents a non-zero homology class in $H_*(G_n(\R^{N-2m+1});\F_2)$.
Indeed, since the Schubert cell decomposition of $G_n(\R^{N-2m+1})$ is a part of the Schubert cell decomposition of $G_n(\R^{N})$ with every cell representing an independent generator in mod $2$, implies that the natural homology map
\[
H_*(G_n(\R^{N-2m+1}); \mathbb F_2) \longrightarrow H_*(G_n(\R^{N}); \mathbb F_2)
\]
is an injection.

Applying the Poincar\'e duality once again we need to prove that the class $w_1(\gamma^n_{N-2m+1})^m$ is non-zero in $H^*(G_n(\R^{N-2m+1});\F_2)$.
A simple sufficient condition for this (see~\cite[Thm.\,3.4]{hil1980A} or \cite[Sec.\,2]{kardol2011}) is the inequality
\[
N-2m- n+1 \geq  m \quad\Longleftrightarrow\quad N= 3m+n-1.
\]
Thus,  $w_1(\gamma^n_N)^{m}w_n(\gamma^n_N)^{2m-1}\neq 0$.

\subsubsection{}
For the second proof we use Pieri's formula and presentation of Stiefel--Whitney classes of the tautological bundle $\gamma^n_{N}$ over $G_n(\R^{N})$  in the form of Schubert cocycles $(a_1,\dots,a_n)$ where $1\leq a_1\leq\dots\leq a_n\leq N-n$.
Following the presentation in \cite{hil1980A} we have that
\begin{compactitem}
\item $w_i(\gamma^n_N)=(0,\dots, 0, 1,\dots,1)$ where $1$ occurs $i$ times for $1\leq i \leq n$,
\item $\bar{w}_j(\gamma^n_N)=(0,\dots, 0, j)$ for $1\leq j\leq N-n$,
\item (\cite[Lemma\,1.2]{hil1980A}) $w_n(\gamma^n_N)^k=(k,\dots,k)$,
\item (Pieri's formula) $(a_1,\dots,a_n)\,\bar{w}_j(\gamma^n_N)=\sum (b_1,\dots,b_n)$ where the sum is over all $(b_1,\dots,b_n)$ with the property that
	\begin{compactitem}
	\item $a_i\leq b_i\leq a_{i+1}$ for all $1\leq i\leq n$, where
	\item $a_{n+1}=N-n$, and
	\item $b_1+\dots+b_n=j+a_1+\dots+a_n$.
	\end{compactitem}	
\end{compactitem}
Using these facts we compute:
\begin{eqnarray*}
	w_1(\gamma^n_N)^{m}w_n(\gamma^n_N)^{2m-1} & = & (2m-1,\dots,2m-1,2m-1)\,w_1(\gamma^n_N)^{m}\\
& = & (2m-1,\dots,2m-1,2m-1)\,\bar{w}_1(\gamma^n_N)^{m}\\
& = & (2m-1,\dots,2m-1,2m)\,\bar{w}_1(\gamma^n_N)^{m-1}\\
& = & \big((2m-1,\dots,2m,2m)+ (2m-1,\dots,2m-1,2m+1)\big)\,\bar{w}_1(\gamma^n_N)^{m-2}\\
& = & \dots \\
& = & A + (2m-1,\dots,2m-1,3m-1).
\end{eqnarray*}
where $A$ is a sum of some Schubert cocycles different from the cocycle $(2m-1,\dots,2m-1,3m-1)$.
Since $3m-1=N-n$ the cocycle $(2m-1,\dots,2m-1,3m-1)$ is not zero and consequently $w_1(\gamma^n_N)^{m}w_n(\gamma^n_N)^{2m-1}\neq 0$.

\medskip
Thus we have concluded the proof of Theorem \ref{Th : Main} and Theorem \ref{Th : stronger}.

%---------------------------------------------
\section{Concluding remarks}
%---------------------------------------------

The last step in the proof of Theorem \ref{Th : Main} can be further improved using the following results of Hiller~\cite{hil1980A} \cite{hil1980B}.

\begin{lemma}
\label{orgindex}
Let $2n\le N$, otherwise replace $n$ by $N-n$ in practical applications.
Let $2^s$ be the minimal power of two, satisfying $2^s\ge N$.
Then
\begin{compactenum}[\rm \quad(1) ]
\item if $n = 1$ then $w_1(\gamma^n_N)^{N-1} \neq 0$,
\item if $n = 2$ then $w_1(\gamma^n_N)^{2^s-2} \neq 0$,
\item if $n > 2$ then in the case $N=2n=2^s$ we have $w_1(\gamma^n_N)^{2^{s-1}} \neq 0$ and $w_1(\gamma^n_N)^{2^s-2} \neq 0$ in other cases.
\end{compactenum}
In all cases $w_1(\gamma^n_N)^{N-n}\neq 0$ and this cannot be improved for $n=1$, $n=2$ and $N=2^s$.
\end{lemma}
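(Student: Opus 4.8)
We indicate the main steps of a proof; the lemma itself is due to Hiller \cite{hil1980A} \cite{hil1980B}. The idea is to convert the statement into a question about the mod~$2$ parity of the number $f^\lambda$ of standard Young tableaux of a partition $\lambda$, and then to settle it by exhibiting explicit partitions for the non-vanishing claims and by a Lucas-type count for the sharpness claims. First one reduces to the case $2n\le N$: under the diffeomorphism $G_n(\R^N)\cong G_{N-n}(\R^N)$ sending a subspace to its orthogonal complement, $\gamma^n_N$ is identified with the complement of $\gamma^{N-n}_N$, so that over $\F_2$ one has $w_1(\gamma^n_N)=\bar w_1(\gamma^{N-n}_N)=w_1(\gamma^{N-n}_N)$. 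Next, exactly as in the computation carried out in Section~\ref{sec : proof}, write $w_1=\bar w_1$ and iterate Pieri's formula: each multiplication by $\bar w_1(\gamma^n_N)$ adds a single box to a Schubert cocycle inside the $n\times(N-n)$ box, so that
\[
w_1(\gamma^n_N)^k=\sum_{\lambda}f^\lambda\cdot(\lambda)\ \in\ H^{k}\big(G_n(\R^N);\F_2\big),
\]
where $\lambda$ runs over the partitions of $k$ whose Young diagram fits into the $n\times(N-n)$ box, and $f^\lambda$ is the number of ways to grow the empty diagram to $\lambda$ one box at a time, i.e.\ the number of standard Young tableaux of shape $\lambda$. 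Since the Schubert cocycles form an $\F_2$-basis of $H^*(G_n(\R^N);\F_2)$, this yields the criterion: $w_1(\gamma^n_N)^k\ne 0$ if and only if some partition of $k$ fits into the $n\times(N-n)$ box and has $f^\lambda$ odd.

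The universal bound $w_1(\gamma^n_N)^{N-n}\ne 0$ is now immediate: take $\lambda$ to be a single row of length $N-n$, which fits and has $f^\lambda=1$. For the sharper bound $w_1(\gamma^n_N)^{2^s-2}\ne 0$ I would first note that the restriction $H^*(G_n(\R^N);\F_2)\to H^*(G_n(\R^{N-1});\F_2)$ induced by $\R^{N-1}\subset\R^N$ sends $w_1(\gamma^n_N)$ to $w_1(\gamma^n_{N-1})$, so that the height of $w_1$ is nondecreasing in $N$ and it suffices to treat the smallest admissible value $N=2^{s-1}+1$. When $n=2$ the box is exactly $2\times(2^{s-1}-1)$, and the full rectangle $\lambda=(2^{s-1}-1,2^{s-1}-1)$ has $f^\lambda$ equal to the Catalan number $C_{2^{s-1}-1}$, which is odd precisely because $2^{s-1}$ is a power of $2$ (recall $C_m$ is odd iff $m+1$ is a power of $2$). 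When $n>2$ one instead selects a near-rectangular partition of $2^s-2$ inside the roomier box $n\times(2^{s-1}+1-n)$ — a two-row rectangle or its transpose when one of those fits, and otherwise an explicit shape such as $(2^{s-1}-2,2^{s-1}-2,2)$ or a close relative adapted to $n$ — and checks that $f^\lambda$ is odd by means of the hook length formula together with standard facts on the $2$-adic valuations of factorials. In the exceptional case $N=2n=2^s$ the bound $w_1(\gamma^n_N)^{2^{s-1}}\ne 0$ is nothing but the universal bound, since there $N-n=2^{s-1}$.

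For the sharpness claims: when $n=1$ one has $G_1(\R^N)=\R P^{N-1}$ and $H^*(\R P^{N-1};\F_2)=\F_2[t]/(t^N)$, so the height of $w_1$ is exactly $N-1=N-n$. When $n=2$ and $N=2^s$ one shows $w_1(\gamma^2_N)^{2^s-1}=0$: any partition $\lambda=(a,b)$ of $2^s-1$ fitting into the box has $a\le N-2\le 2^s-2$, hence $b\ge1$, and the two-row count $f^{(a,b)}=\binom{a+b}{b}-\binom{a+b}{b-1}$ together with Lucas's theorem gives $f^{(a,b)}\equiv1-1=0\pmod 2$, since $2^s-1$ has all its binary digits equal to $1$; combined with $w_1(\gamma^2_{2^s})^{2^s-2}\ne0$ this shows the height is exactly $2^s-2=N-n$. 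The remaining case of the sharpness assertion follows from a similar parity count over the partitions of the relevant size inside the box.

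The step I expect to be the main obstacle is the non-vanishing $w_1(\gamma^n_N)^{2^s-2}\ne 0$ for $n>2$: one must produce, for every admissible pair $(n,N)$, a partition of size $2^s-2$ that simultaneously fits into the $n\times(N-n)$ box and has an odd number of standard Young tableaux, and here a genuine case distinction seems unavoidable, because for $n$ in an intermediate range no rectangle of the right size fits into the box and one is forced to work with less symmetric shapes whose parity must be extracted from the hook length formula. This combinatorial analysis is carried out in full detail in \cite{hil1980A} \cite{hil1980B}.
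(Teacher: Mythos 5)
The paper does not actually prove this lemma: it is quoted from Hiller \cite{hil1980A} \cite{hil1980B} as a remark on possible sharpenings of the last step of the main proof, so there is no in-paper argument to compare yours against. Your sketch follows precisely the method behind Hiller's computation, and it is the same machinery the paper itself uses in Section~\ref{sec : proof}: mod $2$ Schubert calculus and Pieri's formula, giving the criterion that $w_1(\gamma^n_N)^k\neq 0$ if and only if some partition of $k$ inside the $n\times(N-n)$ box has an odd number of standard Young tableaux. The steps you actually carry out are correct: the duality reduction (mod $2$, $w_1$ of a bundle equals $w_1$ of its orthogonal complement), the identification of the coefficient of a Schubert cocycle in $w_1^k$ with $f^\lambda$, the universal bound via the one-row shape, monotonicity of the height in $N$ under restriction, the case $n=2$ via oddness of the Catalan number $C_{2^{s-1}-1}$, and the sharpness for $n=1$ and for $n=2$, $N=2^s$ via the ballot formula and Lucas' theorem.

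Judged as a proof rather than as a pointer to Hiller, however, the gap is exactly where you locate it, and it is not merely technical: for $n>2$ the non-vanishing $w_1(\gamma^n_N)^{2^s-2}\neq 0$ is the real content of the lemma, and the shapes you name do not suffice. For instance with $n=4$, $N=9$ (so $s=4$, box $4\times 5$, degree $2^s-2=14$), neither the two-row rectangle $(7,7)$, nor its transpose, nor $(6,6,2)$ fits in the box; a shape that works is $(5,5,3,1)$, with $f^{(5,5,3,1)}=27027$ odd, and producing such shapes uniformly in $(n,N)$ is the case analysis done in \cite{hil1980A} \cite{hil1980B}. One further caution: the final clause of the lemma should be read as asserting sharpness only for $n=1$ and for the pair $n=2$, $N=2^s$ --- which you did prove --- not for $N=2^s$ (or $N=2n=2^s$) with $n>2$; there the parity count goes the other way, e.g.\ $f^{(3,2)}=5$ and $f^{(4,2,1)}=35$ are odd, so $w_1(\gamma^4_8)^5\neq 0$ and even $w_1(\gamma^4_8)^7\neq 0$, and no ``similar parity count'' can settle a remaining sharpness case. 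Since the paper itself leaves the lemma to Hiller (and uses it only as a remark), deferring these points is defensible, but they are not established by your argument.
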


\noindent
This improvement is not a content of our main result since it would require a complicated statement.

\medskip
Finally we discuss the relationship of our main result with the result of Magazinov and P\'or \cite{MP16}.
A careful reader might have noticed that Theorem~\ref{Th : Main} does not contain Theorem~\ref{Theorem:Centerline} as a particular case in the case when $n+1$ is a power of $2$.
Indeed, when $n+1$ is a power of $2$ the proof presented in Section \ref{sec : proof} cannot be used because the class $w_1(\gamma_{n+1}^n)w_n(\gamma_{n+1}^n)$ vanishes for dimension reasons, that is  $G_n(\R^{n+1})\cong \mathbb R \mathrm P^n$ has dimension $n$.
Nevertheless, this case can be dealt with by a modified argument that follows.

Assume that a section of the regular unit simplex bundle $i^*\eta$ over $G_n(\mathbb R^{n+1})$ is given.
Recall that we consider simplices with unordered  vertices.
Over each $\Gamma\in G_n(\mathbb R^{n+1})$ we have $n+1$ vertices of the simplex in the fiber of $\gamma_{n+1}^n$; in total the set of vertices of all simplices in all fibers produces an $(n+1)$-sheet covering $C\longrightarrow G_n(\mathbb R^{n+1})$.
Since $\pi_1(G_n(\mathbb R^{n+1}))\cong\Z/2$ the covering space $C$ must split into connected components
\[
C = C_1\cup\dots\cup C_m
\]
such that every projection $C_i\longrightarrow G_n(\mathbb R^{n+1})$ is a covering with either one or two sheets.

If some $C_i\longrightarrow G_n(\mathbb R^{n+1})$ is a one sheet covering then it just means that the corresponding vertex produces a section of the canonical bundle $\gamma_{n+1}^n$.
This cannot be because $w_n(\gamma_{n+1}^n)\neq 0$.

If, on the other hand, some $C_i\longrightarrow G_n(\mathbb R^{n+1})$ has two sheets then we have a continuous selection of a pair of vertices $v_1,v_2$ from the regular simplex in every fiber.
Of course, the pair $\{v_1,v_2\}$ is defined up to the order.
But then, in case $n\ge 2$, we may take $v_1+v_2$ as a continuous nonzero section of $\gamma_{n+1}^n$ and obtain a contradiction with $w_n(\gamma_{n+1}^n)\neq 0$ as well.

Thus a section of the regular unit simplex bundle $i^*\eta$ over $G_n(\mathbb R^{n+1})$ cannot exist.
This concludes the argument.

\small
\bibliography{references}{}
\bibliographystyle{amsplain}

\end{document}